\numberwithin{equation}{section}
\theoremstyle{plain}
\newtheorem{theorem}[equation]{Theorem}
\newtheorem{proposition}[equation]{Proposition}
\newtheorem{lemma}[equation]{Lemma} 
\newtheorem{corollary}[equation]{Corollary}
\theoremstyle{definition}
\newtheorem{chunk}[equation]{}
\theoremstyle{remark}
\newtheorem*{ack}{Acknowledgements}
\newcommand{\acat}{\mathsf{C}_{\mathcal{O}}}
\newcommand{\ann}{\operatorname{ann}}
\newcommand{\aqh}[4]{\operatorname{D}_{#1}({#2}/{#3};#4)}
\newcommand{\con}[1]{\Phi_{#1}}
\newcommand{\cmod}[1]{\Psi_{#1}}
\newcommand{\bcmod}[1]{\hat{\Psi}_{#1}}
\newcommand{\depth}{\operatorname{depth}}
\newcommand{\End}[2]{\operatorname{End}_{#1}(#2)}
\newcommand{\Ext}[4]{\operatorname{Ext}^{#1}_{#2}(#3,#4)}
\newcommand{\fitt}[2]{\operatorname{Fitt}_{#1}(#2)}
\newcommand{\gam}[1]{\varGamma_{#1}}
\newcommand{\Hom}[3]{\operatorname{Hom}_{#1}(#2,#3)}
\newcommand{\sHom}[3]{\underline{\operatorname{Hom}}_{#1}(#2,#3)}
\newcommand{\Ker}{\operatorname{Ker}}
\newcommand{\length}[2]{\operatorname{length}_{#1}{#2}}
\newcommand{\pos}[1]{[\![{#1}]\!]}
\newcommand{\rank}{\operatorname{rank}}
\newcommand{\vf}{\varphi}
\newcommand{\fa}{\mathfrak{a}} 
\newcommand{\fb}{\mathfrak{b}} 
\newcommand{\fm}{\mathfrak{m}} 
\newcommand{\fp}{\mathfrak{p}}
\newcommand{\fq}{\mathfrak{q}}
\newcommand{\lra}{\longrightarrow}
\newcommand{\xra}{\xrightarrow}
\newcommand{\bs}{\boldsymbol}
\newcommand{\mco}{\mathcal{O}}
\begin{document}

\title[Wiles defect]{Wiles defect for modules and \\ criteria for freeness}

\author[S.~Brochard]{Sylvain Brochard}
\address{IMAG, University of Montpellier, CNRS, Montpellier, France}
\email{sylvain.brochard@umontpellier.fr}

\author[S.~B.~Iyengar]{Srikanth B.~Iyengar}
\address{Department of Mathematics,
University of Utah, Salt Lake City, UT 84112, U.S.A.}
\email{iyengar@math.utah.edu}

\author[C.~B.~Khare]{Chandrashekhar  B. Khare}
\address{Department of Mathematics,
University of California, Los Angeles, CA 90095, U.S.A.}
\email{shekhar@math.ucla.edu}

\date{\today}

\keywords{congruence module, freeness criterion, Gorenstein ring, Wiles defect}
\subjclass[2020]{11F80 (primary); 13C99, 13F99   (secondary)}

\begin{abstract} 
F.~Diamond proved a numerical criterion for modules over local rings to be free modules over complete intersection rings. 
We formulate  a refinement of these results using the notion of {\it Wiles defect}. A key step in the proof is a formula that expresses the Wiles defect of a module in terms of the Wiles defect of the underlying ring. 
\end{abstract}

\maketitle
   
\section{Introduction}
In his work~\cites{Wiles:1995} on  modularity of elliptic curves and Fermat's Last Theorem, Wiles discovered a numerical criterion for certain noetherian local rings $A$  to be complete intersections. Diamond~\cite{Diamond:1997} generalized Wiles' result by establishing a criterion for modules $M$  over  $A$ to be free and for $A$ to be a complete intersection; see the discussion  below for the precise statements of their results.   

To set the stage for our work we recall the  number theoretic application  of the numerical criterion,  although  this paper concerns only its commutative algebraic aspects.  The ring of interest  is a  deformation ring $R$   associated to a modular  representation  $\overline \rho\colon G_{\mathbb Q} \to GL_2(k)$, with $G_\mathbb Q$ the absolute Galois group of $\mathbb Q$. Here $\overline \rho$ arises from a  Hecke algebra $\mathbb T$ (which is a complete, Noetherian,  local $\mco$-algebra)  acting faithfully on $H^1(X_0(N),\mco)_{\fm}$,  the cohomology of a modular curve $X_0(N)$,  associated to a positive integer $N$,  with coefficients in a discrete valuation ring $\mco$ finite flat over ${\mathbb Z}_p$, localized at the  maximal ideal   $\fm$, and  with $\mathbb T/\fm = k$ a finite field.  There is an  action of $G_{\mathbb Q}$ on $H^1(X_0(N),\mco)_{\fm}$ and  the representation $\overline \rho$  is isomorphic to the representation $\overline \rho_\fm : G_{\mathbb Q} \to GL_2(\mathbb T/\fm)$ associated to $\fm$. This  produces a surjective map  $R \to   \mathbb T$, and the numerical criterion implies in favorable conditions  that $H^1(X_0(N),\mco)_{\fm}$ is free as an $R$-module and that $R$ is a complete intersection. In particular the map $R \to \mathbb T$ is an isomorphism of complete intersections.   In practice,  this is used  to deduce that a certain ring $R'$ (parametrizing deformations of $\overline \rho$ with ramification allowed at a prime $q$) acts freely on $H^1(X_0(Nq^2),\mco)_{\fm'}$, where $\fm'$ is a  maximal ideal of the Hecke algebra acting on $H^1(X_0(Nq^2),\mco)$,  related to $\fm$,  from knowing that  a quotient $R$ of $R'$ acts freely on $H^1(X_0(N),\mco)_{\fm}$.

The main contribution of the present work is a criterion for freeness of a module in terms of its {\em Wiles defect} which was introduced in  \cite{Bockle/Khare/Manning:2021a}; the definition is recalled below. This refines the work of Diamond and Wiles, and also gives a new perspective on these earlier results  from the vantage point of the Wiles defect of rings and of  modules over them. Our proofs differ significantly from those in loc. cit.

The setting for all the results of Wiles and Diamond, and the present paper,  is that there is a commutative, noetherian, local ring $A$ equipped with a surjective map $\lambda\colon A\to \mco$, where $\mco$ is a discrete valuation ring (that will be  fixed throughout the paper), with the property that the conormal module
\[
\con A \colonequals  \fp_A/\fp_A^2\quad \text{where $\fp_A\colonequals \Ker(\lambda)$,}
\]
has finite length as an $\mco$-module. (In the work of Diamond and Wiles, $A$ would be a finite $\mco$-algebra  and $\lambda$ a map of $\mco$-algebras, but we do not impose this.) The \emph{congruence module} of a finitely generated $A$-module $M$ is the $A$-module
\[
\cmod A(M)\colonequals \frac M{M[\fp_A] + M[I_A]}\,,\quad\text{where $I_A\colonequals A[\fp_A]$.}
\]
Here, for any ideal $\fa$ in $A$ we write $M[\fa]$ for $\{m\in M\mid \fa\cdot m=0\}$, the $\fa$-torsion submodule of $M$. As $\fp_A \cdot \cmod A(M)=0$,  the congruence module $\cmod A(M)$ has a natural structure of an $\mco$-module. Moreover the hypothesis that  $\con A$ has finite length implies that the same is true of $\cmod A(M)$. Also $M[\fp_A]$ has a natural structure of an $\mco$-module, and we can consider its rank. Observe that the rank of $M[\fp_A]$ equals the dimension  of $M_{\fp_A}$ over the fraction field of $\mco$, and in particular they are nonzero precisely when $M$ is supported at $\fp_A$. This is the main case of interest in this work.

The \emph{Wiles defect} of the $A$-module $M$ is the integer
\begin{equation}
\label{eq:Wiles-defect}
\delta_{A}(M) =  d\cdot \length{\mco}{\Phi_{A}} -\length{\mco}{\cmod A(M)},
\end{equation}
where $d\colonequals \rank_{\mco} M[\fp_A]$. In \cites{Bockle/Khare/Manning:2021a,Bockle/Khare/Manning:2021b}, this number is divided by $de$, where  $e$ is the ramification index of $\mco$. We  find it more convenient to suppress the denominator. 

We prove:

\begin{theorem}
\label{intro:defect}
Let $M$ be a finitely generated $A$-module with $\depth_AM\ge 1$. There is an equality
\[
\length{\mco}{\cmod A}(M)= (\rank_{\mco} M[\fp_A])\cdot \length{\mco}{\cmod A}(A) - \length{\mco}{(M[\fp_A]/I_AM)}\,.
\]
Equivalently, there is an equality
\[
\delta_{A}(M)=(\rank_{\mco} M[\fp_A])\cdot\delta_A(A)+\length{\mco}{(M[\fp_A]/I_AM)}\,.
\]
In particular $\delta_A(M)\ge 0$. If $M_{\fp_A}\ne 0$ and $\delta_A(M)=0$, then $A$ is complete intersection and $M$ is faithful. When in addition $M$ has rank at most $\rank_{\mco} M[\fp_A]$ at each generic point of $A$, then $M$ is free.
\end{theorem}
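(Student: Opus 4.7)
The two displayed identities are algebraically equivalent by unwinding the definition of $\delta_A$, so it suffices to prove the length formula; the remaining assertions then follow from this identity together with the Wiles--Tate inequality and a freeness criterion over complete intersections.

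My plan for the length formula is to compare $M$ with $d$ copies of $A$, where $d=\rank_{\mco} M[\fp_A]$. Since $M[\fp_A]$ is a finitely generated $\mco$-module of rank $d$, it contains a free $\mco$-submodule of rank $d$; lifting a basis produces a homomorphism $\varphi\colon A^{d}\to M$. Because $I_A^{d}=(A^{d})[\fp_A]$ and $\varphi$ is $A$-linear, $\varphi$ restricts to a homomorphism $\varphi_0\colon I_A^{d}\to M[\fp_A]$ of $\mco$-modules; its cokernel should be identified with $M[\fp_A]/I_A M$, since $\varphi_0(I_A^{d}) = I_A\cdot\varphi(A^{d})$ and the depth hypothesis should force $I_A\cdot\varphi(A^{d})=I_A M$ inside $M$. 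To extract the numerical identity one then chases $\cmod{A}$ through the four-term exact sequence attached to $\varphi$, uses additivity of lengths over $\mco$, and invokes the fact that $\rank_{\mco} I_A = 1$ (which is implicit in the setup, and can be read off by testing the formula against $M=A$). The depth hypothesis $\depth_A M\ge 1$ enters here to control the interaction between $M[\fp_A]$ and $M[I_A]$ in $M$ and to pin down the image of $\varphi_0$ as $I_A M$; I expect this bookkeeping --- isolating $M[\fp_A]/I_A M$ as the precise correction term --- to be the main technical obstacle.

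Given the identity, the inequality $\delta_A(M)\ge 0$ reduces to the Wiles--Tate inequality $\delta_A(A)\ge 0$, which must be proved separately (for instance via the natural surjection $\con{A}\twoheadrightarrow I_A$ induced by multiplication). If $M_{\fp_A}\ne 0$ and $\delta_A(M)=0$, both nonnegative summands on the right of the defect identity vanish: $\delta_A(A)=0$ forces $A$ to be a complete intersection by Wiles's criterion, and $I_A M = M[\fp_A]\ne 0$ shows that $I_A$ does not annihilate $M$. Combined with $M_{\fp_A}\ne 0$ and the structure of $A$ as a complete intersection with $A/\fp_A=\mco$ (in which $\fp_A\cap I_A=0$), this yields $\ann_A M = 0$, i.e., $M$ is faithful. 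For the freeness assertion, the additional rank hypothesis forces the generic rank of $M$ to equal $d$, and a standard argument over CI rings --- exploiting faithfulness, depth $\ge 1$, and the rank equality --- then yields that $M$ is free, recovering the Diamond-type freeness criterion from within the present framework.
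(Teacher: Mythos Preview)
Your plan for the length formula has a concrete error in identifying $\operatorname{im}\varphi_0$. Because your basis $e_1,\dots,e_d$ lies in $M[\fp_A]$, the $A$-action on each $e_i$ factors through $\mco$, so $\varphi(A^d)=\mco e_1+\cdots+\mco e_d\subseteq M[\fp_A]$ and therefore $I_A\varphi(A^d)=\lambda(I_A)\cdot\varphi(A^d)$, which is in general strictly smaller than $I_AM$. Already for $M=A$ (so $d=1$ and $e_1$ a generator of $I_A$) one gets $I_A\varphi(A)=I_A^2\subsetneq I_A=I_AM$ whenever $\cmod A\ne 0$, and your ``cokernel'' computes $I_A/I_A^2\cong\cmod A$ rather than $A[\fp_A]/I_AA=0$. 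The paper avoids any comparison map: it introduces $\bcmod A(M)\colonequals M/(M[I_A]+I_AM)$, observes $\bcmod A(M)\cong\cmod A\otimes_{\mco}(M/M[I_A])$, and uses that $M/M[I_A]$ is $\mco$-free of rank $d$ (Lemma~\ref{le:M-sub-free}); the exact sequence $0\to M[\fp_A]/I_AM\to\bcmod A(M)\to\cmod A(M)\to 0$, valid because the depth hypothesis gives $M[\fp_A]\cap M[I_A]=0$, then yields the formula. If you insist on a comparison map, the right object to lift from is $M/M[I_A]$, not $M[\fp_A]$: choosing $m_i\in M$ mapping to an $\mco$-basis of $M/M[I_A]$ gives $\varphi(A^d)+M[I_A]=M$, hence $I_A\varphi(A^d)=I_AM$, and unwinding this leads back to the paper's computation.

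Your deduction that $A$ is a complete intersection has a second gap: Wiles's criterion (Theorem~\ref{th:Wiles}) needs positive depth on the ring, but the hypothesis only gives $\depth_AM\ge 1$, not $\depth A\ge 1$. The ring $A=\mco\pos{x}/(\varpi x,x^2)$ of~\ref{ch:depth0} satisfies $\delta_A(A)=0$ (both $\con A$ and $\cmod A$ have length one) yet is not a complete intersection. The paper handles this by passing to $B=\operatorname{im}(A\to\End AM)$, which inherits positive depth from $M$; Lemma~\ref{le:delta-invariance} gives $\delta_A(M)\ge\delta_B(M)\ge 0$, so $\delta_A(M)=0$ forces both $\delta_B(B)=0$ and $\length{\mco}{\con A}=\length{\mco}{\con B}$, and then Theorem~\ref{th:Wiles} applied to the surjection $A\to B$ shows it is an isomorphism of complete intersections. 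This delivers the complete intersection property for $A$ and the faithfulness of $M$ in one stroke; your separate sketch for faithfulness (from $\ann M\subseteq\fp_A$ and $I_A\not\subseteq\ann M$) is not needed, and as written it does not obviously conclude.
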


The first part of this result, relating the lengths of the congruence modules of $M$ and of $A$, is contained in Theorem~\ref{th:delta-AM}. 
The last part of Theorem~\ref{intro:defect}, describing when $\delta_A(M)=0$ is  suggested  by, and refines, the result of Diamond~\cite[Theorem~2.4]{Diamond:1997}; see Theorem~\ref{th:Diamond} and also the result below. In \cite{Diamond:1997} the module $M$ is required to be finite flat over $\mco$; we replace this by the weaker condition that $M$ is  finitely generated over $A$ and of positive depth.

 One input in its proof is a result of  ~\cite{Fakhruddin/Khare/Ramakrishna:2021} that dealt with the case $M$ is a cyclic $A$-module; see Theorem~\ref{th:Wiles} below.  The main new ingredient is the following criterion for freeness of modules.

\begin{theorem}
\label{intro:Diamond}
Suppose that the ring $A$ is Gorenstein and that $M$ is a finitely generated $A$-module with $\depth_AM\ge 1$.  If 
 \[
 \delta_A(M)= (\rank_{\mco} M[\fp_A]) \delta_A(A)\,,
 \]
and $M$ has rank at most $\rank_{\mco} M[\fp_A]$ at each generic point of $A$, then $M$ is free.
\end{theorem}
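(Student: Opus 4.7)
Set $d = \rank_\mco M[\fp_A]$. By the formula
\[
\delta_A(M) = d \cdot \delta_A(A) + \length{\mco}{M[\fp_A]/I_A M}
\]
of Theorem~\ref{intro:defect}, the hypothesis $\delta_A(M) = d\cdot\delta_A(A)$ is equivalent to the identity $M[\fp_A] = I_A \cdot M$. When $A$ is a complete intersection, $\delta_A(A) = 0$ and freeness of $M$ already follows from Theorem~\ref{intro:defect}, so the essentially new content is the case where $A$ is Gorenstein but not a complete intersection, around which the plan is built.

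By definition $I_A = A[\fp_A] = \Hom{A}{\mco}{A}$ and $M[\fp_A] = \Hom{A}{\mco}{M}$, so the identity $I_A M = M[\fp_A]$ translates to the surjectivity of the natural evaluation map $\Hom{A}{\mco}{A}\otimes_A M \to \Hom{A}{\mco}{M}$. I would then pick lifts $m_1, \dots, m_d \in M$ of an $\mco$-basis of $M[\fp_A]$ modulo $\mco$-torsion, use them to define an $A$-linear map $\varphi\colon A^d \to M$, and aim to show $\varphi$ is an isomorphism.

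Surjectivity of $\varphi$ is equivalent by Nakayama to $\mu_A(M) = d$, equivalently to $M/\fp_A M$ being $\mco$-free of rank $d$; this is the main obstacle. I expect one dualizes the short exact sequence $0 \to M[\fp_A] \to M \to M/M[\fp_A] \to 0$ with $\Hom{A}{-}{A}$ and combines the identity $M[\fp_A] = I_A M$, the Gorenstein self-duality $A \cong \omega_A$, and $\depth_A M \ge 1$ to exclude $\mco$-torsion in $M/\fp_A M$. Once surjectivity is in hand, injectivity of $\varphi$ follows from a generic-rank argument at the minimal primes of $A$, combined with the fact that $A^d$ is Cohen--Macaulay and hence has no embedded associated primes.
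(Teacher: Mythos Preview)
Your opening reduction is correct: by Theorem~\ref{th:delta-AM} the defect hypothesis is equivalent to $M[\fp_A]=I_AM$, and this is indeed the starting point of the paper's argument.  After that, however, your plan diverges from the paper and does not go through.

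The surjectivity step is a genuine gap.  You observe correctly that $\mu_A(M)=d$ is equivalent to $M/\fp_A M$ being $\mco$--free of rank $d$, but your proposed route---dualising $0\to M[\fp_A]\to M\to M/M[\fp_A]\to 0$ into $A$ and invoking Gorenstein self-duality---is only a hope, not an argument.  Nothing in that sequence, even combined with $M[\fp_A]=I_AM$ and $\depth_AM\ge 1$, obviously forces the $\mco$--torsion of $M/\fp_A M$ to vanish; you would have to produce that mechanism explicitly, and I do not see one along the lines you sketch.

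The injectivity step also fails as written.  Once a surjection $\varphi\colon A^d\twoheadrightarrow M$ is in hand, the condition ``rank of $M$ at each generic point is at most $d$'' is automatic and therefore gives no purchase on $\Ker\varphi$.  What is actually needed is $e_A(M)=d\cdot e(A)$, i.e.\ equality, whereas the hypothesis is only the inequality $e_A(M)\le d\cdot e(A)$; the equality is a \emph{consequence} of freeness, not something available in advance.  So a direct counting argument at the minimal primes cannot close the loop.

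The paper proceeds quite differently (Theorem~\ref{th:prediamond}).  One chooses $x\in A$ mapping to a uniformiser and regular on both $A$ and $M$, and passes to $R=A/xA$ and $N=M/xM$.  Then $R$ is Artinian Gorenstein with socle $I_A R$ (Lemma~\ref{le:socle}), and since $x$ is a nonzerodivisor the multiplicities become lengths: $e(A)=\length{}R$, $e_A(M)=\length RN$, and, via Lemma~\ref{le:ideal-computation}, $e_A(I_AM)=\length R(I_AR\cdot N)$.  The two hypotheses then collapse to the single inequality
\[
\length RN \le \length R(R[\fm_R]\cdot N)\cdot \length{}R,
\]
and a short injective-hull argument over zero-dimensional Gorenstein rings (Lemma~\ref{le:prediamond0}: if $N$ had no free summand it would embed in $\fm_R F$ for $F$ free, forcing $R[\fm_R]\cdot N=0$) shows $N$ is $R$--free, hence $M$ is $A$--free.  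This reduction to dimension zero and the socle-length criterion are the missing ideas; they are what allow the mere inequality $e_A(M)\le d\cdot e(A)$ to suffice.
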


This result is implied by  Theorem~\ref{th:prediamond}, where the condition on ranks is replaced by a  weaker one involving  multiplicities.  

The proofs of Theorems~\ref{intro:defect} and \ref{intro:Diamond} are based on a careful study of congruence modules, and various other auxiliary modules related to them. This is the contents of Sections~\ref{se:acat} and \ref{se:congruence}.  In Section~\ref{se:Venkatesh} we give a more streamlined proof of a formula for  $\delta_A(A)$, formulated  (albeit in the setting of certain derived rings) by Venkatesh~\cite{Venkatesh:2006},  and proved in \cite{Appendix:2021}, in terms of certain Andr\'e-Quillen cohomology modules. We end by   explaining how this formula gives another proof of the isomorphism criterion for maps between complete intersection rings due to Wiles~\cite{Wiles:1995} and Lenstra~\cite{Lenstra:1995}.

\begin{chunk} 
We  end the introduction by expanding on the potential significance of Theorem \ref{intro:defect} for the study of congruences between modular forms. The question of comparing congruence modules for $A$ and $M$  has been studied extensively, in the context of  the theory of congruences between modular forms. This theory plays a key role in the  breakthrough work of Wiles~\cite{Wiles:1995}.  As recalled above one studies  a Hecke algebra $\mathbb T$ acting on $H^1(X_0(N),\mco)_{\fm}$ that is isomorphic to $ M \oplus M$ where $M$ is $\mathbb  T$-module that is finite flat over $\mco$,  and hence of positive depth, and of generic rank one as a $\mathbb T$-module. One focuses on an augmentation $\lambda_f=\lambda\colon \mathbb T \to \mco$ arising from a weight 2 newform $f \in S_2(\Gamma_0(N))$. In this context the question of showing that the  congruence modules for $\mathbb T$ and $M$ (associated to the augmentation $\lambda_f$ arising from the newform $f$) are the same has been studied in works of Hida \cite{Hida:1981} and Ribet \cite{Ribet:1983}  in 1980’s and in  many other works, including \cite{Wiles:1995}. The motivation for  doing this   is that the (cohomological) congruence module of $M$ is easier to study and related to a critical value of the  $L$-function associated to the adjoint motive of $f$ (as discovered by Hida in his seminal work), while the congruence module for $\mathbb T$ is more directly related to congruences between $f$ and other newforms in $S_2(\Gamma_0(N))$. In these works, it was shown that the natural surjection   $ \Psi_{\mathbb T}(\mathbb T) \to \Psi_{\mathbb T}(M)  $ is an isomorphism, thus proving that all congruences between $f$ and other newforms in $S_2(\Gamma_0(N))$ are detected ``cohomologically",    by showing that $M$ is a free $\mathbb T$-module (of rank 1).  It follows from our results that such an isomorphism holds precisely when $ M[\fp_A]/I_AM=0$ which can happen without $M$  being free over $\mathbb T$. For instance,   \cite[Theorem 3.12]{Bockle/Khare/Manning:2021b} implies that $ M[\fp_A]/I_AM=0$ when $\End{\mathbb T}M=\mathbb T$, which is a weaker condition than freeness. We hope the observation recorded in the first part of Theorem \ref{intro:defect}, that gives  meaning to the kernel of the  surjective map  $ \Psi_{\mathbb T}(\mathbb T) \to \Psi_{\mathbb T}(M) $, will be useful in the further  study of congruences between modular forms.
\end{chunk}

\begin{ack}
It is a pleasure to thank a referee for a careful reading of the manuscript. The work of the second author was partly supported by the National Science Foundation, under grant DMS-200985.
\end{ack}

\section{The category $\acat$}
\label{se:acat}

\begin{chunk}
\label{ch:catA}
Throughout this work $\mco$ is a discrete valuation ring and $\varpi$  a uniformizing parameter for $\mco$. We write $\acat$ for the category consisting of pairs $(A,\lambda_A)$ where $A$ is a commutative, noetherian, local ring and $\lambda_A\colon A\to \mco$ is a surjective map of rings such that the conormal module $\con A\colonequals \fp_A/(\fp_A)^2$, where $\fp_A\colonequals \Ker(\lambda_A)$, has finite length. The morphisms in $\acat$ are local maps $\vf\colon A\to B$  such that $\lambda_B\vf=\lambda_A$.  The rings in $\acat$ have the same residue field, namely, $\mco/\varpi\mco$.

The condition that the conormal module of $A$ has finite length is equivalent to the natural map $A_{\fp_A}\to \mco_{(0)}$ being an isomorphism; here $\mco_{(0)}$ is the quotient field of $\mco$. Thus  $\fp_A$ is a  minimal prime of $A$ and $\dim A/\fp_A = \dim \mco =1$. This has the following consequence. Subject to the constraint that $\depth A\le 1$,  the pair $(\depth A,\dim A)$ can take all possible values; see Example~\ref{ch:example}.

\begin{lemma}
\label{le:depth}
For any $A\in\acat$ one has $\depth A\le 1\le \dim A$. 
\end{lemma}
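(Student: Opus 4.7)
My plan is to deduce both inequalities from the single observation, already recorded in the paragraph preceding the lemma, that $\fp_A$ is a minimal prime of $A$ with $A/\fp_A \cong \mco$, and in particular $\dim A/\fp_A = 1$.

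For the inequality $1 \le \dim A$, I would just note that $\dim A \ge \dim A/\fp_A = 1$, since dimension only grows under quotients by primes. This is the easy half.

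For the inequality $\depth A \le 1$, the key input is that a minimal prime of a noetherian ring is an associated prime. So $\fp_A \in \operatorname{Ass}(A)$. I then invoke the standard inequality
\[
\depth A \le \dim(A/\fp) \quad \text{for every } \fp \in \operatorname{Ass}(A),
\]
applied with $\fp = \fp_A$, to conclude $\depth A \le 1$. This inequality is classical: one can either cite it directly, or argue by induction on $\depth A$, using that any $A$-regular element must avoid every associated prime of $A$, and that passing to a quotient by one regular element drops $\dim(A/\fp_A)$ by at most one while dropping the depth by exactly one.

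No real obstacle is present; the lemma is almost immediate once one parses the statement in the preceding paragraph identifying $A/\fp_A$ with $\mco$. The only subtlety is to recognize that the finite-length hypothesis on $\con A$ forces $\fp_A$ to be \emph{minimal}, not merely that $A/\fp_A$ has dimension one---but this has already been established in the text just above. The lemma thus serves mainly to record the constraints imposed by membership in $\acat$, and to motivate the example that follows showing these are the only constraints.
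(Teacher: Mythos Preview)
Your proof is correct and follows essentially the same route as the paper's: both deduce $\dim A\ge 1$ from the surjection onto $\mco$ (equivalently, from $\dim A/\fp_A=1$), and both bound the depth by invoking the inequality $\depth A\le \dim A/\fq$ for $\fq\in\operatorname{Ass}(A)$ with $\fq=\fp_A$. The only difference is that you spell out why $\fp_A$ is associated (as a minimal prime), which the paper leaves implicit.
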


\begin{proof}
The claim about dimension is clear from the surjection $A\to \mco$. Any associated prime $\fq$ of $A$ satisfies $\depth A\le \dim A/\fq$; see \cite[Proposition~1.2.13]{Bruns/Herzog:1998}. Setting $\fq\colonequals \fp_A$ yields the upper bound on the depth of $A$.
\end{proof}

Let $A$ be in $\acat$ and  set $I_A\colonequals A[\fp_A]$, the annihilator of $\fp_A$.  In addition the following objects also play an important role in this work:
\[
\cmod A \colonequals \mco/\lambda(I_A) \, \quad\text{and}\quad \frac{I_A}{I_A^2}\,.
\]
The first one is the  \emph{congruence algebra} of $A$, and the last one is the conormal module of the map $A\to A/I_A$.  Since $\fp_A\cdot I_A=0$ the $A$-action on $I_A$ factors through $\mco$, and so the $A$-action on $I_A/I_A^2$ factors through $\cmod A$. By the same token, $\con A$ is a $\cmod A$-module. 

Set $K\colonequals \mco_{(0)}$, the field of fractions of $\mco$.  Viewing $I_A$ as an $\mco$-module, one has 
\begin{equation}
\label{eq:Irank}
(I_A)_{(0)}\cong {\Hom A{\mco}A}_{\fp_A} \cong \Hom{A_{\fp_A}}K{A_{\fp_A}} \cong \Hom KKK\cong K\,.
 \end{equation}
Thus $\rank_{\mco}(I_A)=1$. It also follows that $I_A\not\subseteq \fp_A$, for $\fp_A A_{\fp_A}=0$, so $\lambda(I_A)\ne 0$; equivalently, that $\cmod A$ is a torsion $\mco$-module, that is to say, of finite length. In fact since the Fitting ideal of $\fp_A$ is contained in its annihilator one gets an inequality
\begin{equation}
\label{eq:Wiles-inequality}
\length{\mco}{\con A}\ge \length{\mco}{\cmod A}\,.
\end{equation}
See \cite[Section 5, (5.2.3)]{Darmon/Diamond/Taylor:1997}. Wiles and Lenstra proved that equality holds if and only if the ring $A$ is complete intersection; see Theorem~\ref{th:Wiles}.

Evidently $\fp_A \subseteq A[I_A]$ but in fact equality always holds. 

\begin{lemma}
\label{le:pi-duality}
For any $A$ in $\acat$ one has $\fp_A = A[I_A]$. 
\end{lemma}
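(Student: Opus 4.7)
The inclusion $\fp_A\subseteq A[I_A]$ is already recorded just above the statement, so the plan is to establish the reverse inclusion $A[I_A]\subseteq \fp_A$. The strategy will be to use the map $\lambda$ to detect elements outside $\fp_A$ and to exploit the fact, observed in the excerpt, that $\lambda(I_A)\neq 0$.

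Concretely, I would pick $a\in A[I_A]$ and argue by contrapositive: assume $a\notin\fp_A$, so that $\lambda(a)\neq 0$ in $\mco$. By the discussion right after \eqref{eq:Irank}, we have $\lambda(I_A)\neq 0$, so there exists some $x\in I_A$ with $\lambda(x)\neq 0$. Since $a\in A[I_A]$ we have $ax=0$ in $A$, and applying $\lambda$ yields $\lambda(a)\lambda(x)=0$ in $\mco$. But $\mco$ is a domain and both factors are nonzero, which is the desired contradiction; hence $\lambda(a)=0$, i.e., $a\in\fp_A$.

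The only genuine input beyond the definitions is the nonvanishing $\lambda(I_A)\neq 0$, and this has already been justified in the paragraph containing \eqref{eq:Irank} (from the isomorphism $(I_A)_{(0)}\cong K$, together with the fact that $\fp_A A_{\fp_A}=0$). So I expect the proof to be a short two-line argument with no real obstacle; the main point is simply to observe that the $\mco$-valued functional $\lambda$ separates $\fp_A$ from the rest of $A$ and that $I_A$ contains an element not killed by $\lambda$.
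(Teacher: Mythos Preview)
Your argument is correct and is essentially identical to the paper's own proof: the paper observes $\lambda(A[I_A])\cdot\lambda(I_A)=\lambda(A[I_A]\cdot I_A)=0$ and, since $\mco$ is a domain with $\lambda(I_A)\ne 0$, concludes $\lambda(A[I_A])=0$, i.e.\ $A[I_A]\subseteq\fp_A$. Your element-wise contrapositive is just a rephrasing of this same two-line computation.
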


\begin{proof}
One has $\lambda(A[I_A]) \cdot \lambda (I_A) = \lambda (A[I_A]\cdot I_A)=0$. Since $\mco$ is a domain and $\lambda(I_A)$ is nonzero, it follows that $\lambda(A[I_A])=0$, that is to say, $A[I_A]\subseteq \fp_A$. 
\end{proof}
\end{chunk}

The following computation will be useful later on. The hypothesis on depth is needed even for the weaker conclusion; see~\ref{ch:depth0}. 

\begin{lemma}
\label{le:depth=1}
Let $A$ be an object in $\acat$ and $M$ a finitely generated $A$-module. When $\depth_AM\ge 1$ one has $ M[\fp_A] \cap M[I_A] =(0)$; in particular $I_AM\cap \fp_AM=0$.
\end{lemma}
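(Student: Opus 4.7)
Set $N\colonequals M[\fp_A]\cap M[I_A]$. Every element of $N$ is annihilated by the sum ideal $\fp_A+I_A$, so the whole task reduces to exhibiting an element of $\fp_A+I_A$ that is a non-zerodivisor on $M$: that will force $N=0$. The ``in particular'' clause is then automatic, since $\fp_A\cdot I_A=0$ (by the defining property $I_A=A[\fp_A]$) yields both containments $I_AM\subseteq M[\fp_A]$ and $\fp_AM\subseteq M[I_A]$, whence $I_AM\cap \fp_AM\subseteq N=0$.

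To produce such a non-zerodivisor I would invoke prime avoidance and show that $\fp_A+I_A$ is contained in no associated prime of $M$. Suppose for contradiction that $\fp_A+I_A\subseteq \fq$ for some $\fq\in\operatorname{Ass}_A M$. Then $\fp_A\subseteq \fq$, and since $A/\fp_A\cong\mco$ is one-dimensional (as recalled in~\ref{ch:catA}), the only primes of $A$ lying over $\fp_A$ are $\fp_A$ itself and the maximal ideal $\fm_A$. The case $\fq=\fp_A$ would force $I_A\subseteq \fp_A$, contradicting $\lambda(I_A)\ne 0$ (established just after~\eqref{eq:Irank}). The case $\fq=\fm_A$ would place $\fm_A$ in $\operatorname{Ass}_A M$, contradicting $\depth_AM\ge 1$. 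Hence no such $\fq$ exists, and prime avoidance delivers the required element.

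The main obstacle, if any, is purely organizational: once one recognizes $\fp_A+I_A$ as the natural ideal to consider and exploits the one-dimensionality of $A/\fp_A$, the argument rests only on the depth hypothesis and on the fact that $\lambda(I_A)\ne 0$.
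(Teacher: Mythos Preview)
Your argument is correct and follows essentially the same route as the paper's: both observe that $M[\fp_A]\cap M[I_A]$ is annihilated by $\fp_A+I_A$ and then produce an $M$-regular element in that ideal. The paper phrases the key step as ``$\fp_A+I_A$ contains some power of $\fm_A$'' (equivalently, $A/(\fp_A+I_A)\cong\cmod A$ has finite length), whereas you unpack the same fact via prime avoidance and the classification of primes above $\fp_A$; these are two views of the same observation.
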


\begin{proof}
The $A$-modules $M[\fp_A]$ and $M[I_A]$ are annihilated by $\fp_A$ and $I_A$, respectively. Thus $M[\fp_A] \cap M[I_A]$ is annihilated by $\fp_A + I_A$.
This ideal contains some power of the maximal ideal of $A$, and hence it contains an element that is not a zero divisor on $M$, since the latter has positive depth. Thus $M[\fp_A] \cap M[I_A]=(0)$.
\end{proof}

The conormal module and congruence module are related: Since $A$-acts on $I_A$ through $\mco$ one gets isomorphisms
\begin{equation}
\label{eq:conormal-complement}
\frac{I_A}{I_A^2} \cong I_A\otimes_A \frac A{I_A} \cong I_A \otimes_{\mco} (\mco \otimes_A \frac A{I_A}) \cong I_A\otimes_{\mco} \cmod A\,.
\end{equation}
Here is another expression of the relation between all these invariants
\[
\frac{I_A}{I^2_A}\otimes_{(A/I_A)}\fp_A \cong I_A \otimes_A \fp_A \cong I_A\otimes_{\mco} \frac{\fp_A}{\fp^2_A}\,.
\]
To put this isomorphism in a larger context it helps to remark that 
\[
\fp_A = \Hom A{A/I_A}A \quad\text{and} \quad I_A = \Hom A{\mco}A\,,
\]
where the first equality is by Lemma~\ref{le:pi-duality} and the second is by definition. These are the relative dualizing modules for the maps $A\to A/I_A$ and $A\to \mco$ respectively. Here is one consequence of these observations.

\begin{lemma}
\label{le:conI-free}
Let $A$ be an object in $\acat$. When $\depth A=1$ the ideal $I_A\subset A$ is principal and as a $\cmod A$-module $I_A/I_A^2$ is free of rank one. 
\end{lemma}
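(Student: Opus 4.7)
The plan is to reduce the entire lemma to the single statement that \emph{$I_A$ is free of rank one as an $\mco$-module}. Granting this and writing $I_A = \mco f$, the fact that the $A$-action on $I_A$ factors through $A/\fp_A \cong \mco$ yields $Af = \mco f = I_A$, so $I_A$ is principal as an ideal of $A$. Moreover, the isomorphism $I_A/I_A^2 \cong I_A \otimes_\mco \cmod A$ from \eqref{eq:conormal-complement}, combined with $I_A \cong \mco$, immediately identifies $I_A/I_A^2$ with $\cmod A$ as a $\cmod A$-module, which is free of rank one.

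Now for the reduction itself. The $\mco$-module $I_A$ is finitely generated, being an ideal in the noetherian ring $A$ on which the $A$-action factors through $\mco$, and it has $\mco$-rank one by \eqref{eq:Irank}. Since $\mco$ is a DVR, to show $I_A$ is free it suffices to rule out nonzero $\mco$-torsion, i.e.\ to show no nonzero $x \in I_A$ satisfies $\varpi x = 0$.

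Suppose such an $x$ exists and pick any $b \in A$ with $\lambda(b) = \varpi$. The definition of the $\mco$-action on $I_A$ gives $bx = 0$ in $A$, and combined with $\fp_A x = 0$ (from $x \in I_A = A[\fp_A]$) this shows that the ideal $(b) + \fp_A$ annihilates $x$. But $(b) + \fp_A$ contains $\fp_A = \Ker(\lambda)$ and maps onto $\varpi\mco = \fm_\mco$ under $\lambda$, so $(b) + \fp_A = \lambda^{-1}(\fm_\mco) = \fm_A$. Hence $\fm_A$ annihilates $x$, and the hypothesis $\depth A = 1$ supplies a non-zero-divisor in $\fm_A$, forcing $x = 0$. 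The only point that seems to require genuine thought is the identification $(b) + \fp_A = \fm_A$; once that is noted, everything else is either formal or is already set up in the chunks preceding this lemma.
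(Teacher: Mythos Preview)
Your proof is correct and follows essentially the same approach as the paper: both reduce to showing that $I_A$ is free of rank one over $\mco$, and then draw the conclusions about principality and about $I_A/I_A^2$ via \eqref{eq:conormal-complement} exactly as you do. The only difference is that the paper obtains torsion-freeness more quickly by invoking Lemma~\ref{le:depth=1} with $M=A$ (together with $A[I_A]=\fp_A$ from Lemma~\ref{le:pi-duality}) to get $I_A\cap\fp_A=0$, so that $\lambda|_{I_A}\colon I_A\to\mco$ is injective; your direct argument that any $\varpi$-torsion element is killed by $\fm_A=(b)+\fp_A$ is essentially an inline special case of that lemma's proof.
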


\begin{proof}
By Lemma~\ref{le:depth=1} the hypothesis that $\depth A\ge 1$ implies $I_A\cap \fp_A=0$, so the composition $I_A\to A\to \mco$ is injective. Thus as an $\mco$-module $I_A$ is free and of rank one; in particular $I_A\subset A$ is principal. Moreover \eqref{eq:conormal-complement} implies that as a $\cmod A$-module $I_A/I_A^2$ is free of rank one.
\end{proof}

\begin{chunk}
\label{ch:example}
 After suitable completion, any $A$ in $\acat$ is of the form $\mco\pos{\bs x}/J$, for indeterminates $\bs x\colonequals x_1,\dots,x_n$, and  $J\subseteq \varpi (\bs x)+ (\bs x)^2$. One has $\fp_A\colonequals (\bs x)$. Let $f_1,\dots,f_c$ be a minimal generating set for the ideal $J$. For each $i$ there is an unique expression of the form
\[
f_i = \sum a_{ij} x_j + \text{term in $(\bs x)^2$}\,,
\]
with $a_{ij}$ in $\mco$. It is easy to verify that the conormal module of $\lambda_A$ has a presentation
\[
\mco^c \xra{\ (a_{ij}) \ } \mco^n \lra \con A\lra 0\,.
\]
Thus the condition that $\con A$ has finite length is equivalent to $\rank(a_{ij}) = n$; equivalently, $\fitt 0{a_{ij}}$, the zeroth Fitting ideal of the $\mco$-module $\con A$, is nonzero. Using this, or even directly, one can verify that the ring 
\[
A\colonequals \frac{\mco\pos{x_1,\dots, x_n}}{(\varpi x_1,\dots,\varpi x_n)}
\]
is in $\acat$.  The associated primes of $A$ are $(\varpi)$ and $(\bs x)$, so it follows that $\dim A = n$ and  $\depth A=1$.   This shows that for each positive integer $n$, one has rings $A$ in $\acat$  with  $\depth A=1$ and $  \dim A=n$; see  Lemma~\ref{le:depth}. In the same vein the ring $A/x_1(\bs x)$ satisfies $\dim A=n-1$ and $\depth A=0$.
\end{chunk}

The next example shows that the condition  $\depth A\ge 1$ in  Lemma~\ref{le:depth=1} is not superfluous. 

\begin{chunk}
\label{ch:depth0}
When $A\colonequals \mco\pos{x}/(\varpi x,x^2)$ one has 
\[
\fp_A=(x)\subseteq I_A=(\varpi, x)\,.
\]
Thus $I_A\cap \fp_A = \fp_A$. 
\end{chunk}

Next we describe a ring in the category $\acat$ that is Gorenstein, but not complete intersection. This is in anticipation of Theorem~\ref{th:prediamond}.

\begin{chunk}
\label{ch:Gor-not-ci}
Assume $2$ is invertible in $\mco$ and consider the ring
\[
A\colonequals \frac{\mco\pos{x,y,z}} {(x^2-y^2, x^2-z^2, \varpi x - yz, \varpi  y-xz, \varpi  z-xy)}\,.
\]
From \ref{ch:example} one gets that $\con A\cong k^3$, where $k\colonequals \mco/(\varpi)$. In particular $A$ is in $\acat$. We claim that the ring $A$ is reduced, Gorenstein of Krull dimension one, but not complete intersection. It is also finite and free as an $\mco$-module.

Indeed, as an $\mco$-module, $A$  has a basis consisting of (residue classes of) elements $1, x,y,z, x^2$, so that $A$ is finite and free over $\mco$. In particular $\varpi$ is not a zero-divisor on $A$. The ring $A/(\varpi)$ is zero-dimensional, with socle the ideal $(x^2)$, and hence it is Gorenstein; however, it is not a complete intersection, for it has embedding dimension three, but five defining relations; see \cite[Example~3.2.11(b)]{Bruns/Herzog:1998}. Thus $A$ itself is Gorenstein of Krull dimension one, and not complete intersection.

It remains to verify that $A$ is reduced.  A straightforward calculation yields that the prime ideals in $A$ are:
\begin{gather*}
(\varpi, x,y,z)\,, \quad (x,y,z)\,, \quad (x -\varpi, y- \varpi, z- \varpi)\\
(x +\varpi, y+ \varpi, z- \varpi)\,, \quad (x +\varpi, y- \varpi, z+ \varpi)\,, \quad (x -\varpi, y+ \varpi, z+ \varpi)\,.
\end{gather*}
In this list, the first one is the maximal ideal; the rest are minimal. The localization of $A$ at any minimal prime is a field. Since $A$ is Cohen-Macaulay of dimension one, it thus satisfies Serre conditions $(S_1)$ and $(R_0)$, and hence is reduced.

\end{chunk}

\section{Congruence modules}
\label{se:congruence}

In this section we develop basic properties of congruence modules for modules over rings in $\acat$. This prepares us for the next section where we obtain criteria for freeness of the modules in terms of Wiles defects of the modules in question. We begin by recording an observation that will be used multiple times in the sequel.

\begin{lemma}
\label{le:ideal-computation}
Let $J$ be an ideal in a ring $A$ and $M$ an $A$-module. If $x\in A$ is not a zerodivisor on $M$, then 
\[
xM \cap M[J] = x(M[J])\,.
\]
Thus when $A$ is local, $M$ is finitely generated, and $x$  in not a unit in $A$,  either $M[J]=0$ or $M[J]\not \subseteq xM$.
\end{lemma}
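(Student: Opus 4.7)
The plan is to prove the displayed equality by double inclusion, using the non-zerodivisor hypothesis only in one direction, and then derive the contrapositive form of the second assertion from Nakayama's lemma.

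First I would prove $x(M[J]) \subseteq xM\cap M[J]$. If $m \in M[J]$, then $xm$ obviously lies in $xM$, and since $J(xm) = x(Jm) = 0$, it also lies in $M[J]$. This inclusion does not use that $x$ is a nonzerodivisor.

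Next I would prove the reverse inclusion $xM\cap M[J]\subseteq x(M[J])$, which is where the hypothesis on $x$ enters. Pick $y$ in the intersection and write $y = xm$ with $m\in M$. Then for every $j\in J$ we have $x(jm) = j(xm) = jy = 0$, and because $x$ is not a zerodivisor on $M$, this forces $jm = 0$. Thus $Jm = 0$, i.e.\ $m\in M[J]$, whence $y = xm \in x(M[J])$.

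For the second assertion, I would argue by contradiction. Assume $M[J]\subseteq xM$, so that $xM\cap M[J] = M[J]$. The displayed equality just proved then gives
\[
M[J] = x\cdot M[J].
\]
Since $A$ is noetherian (an assumption in force throughout the paper, cf.\ \ref{ch:catA}) and $M$ is finitely generated, the submodule $M[J]$ is finitely generated. As $x$ is a nonunit in the local ring $A$, it lies in the maximal ideal, and Nakayama's lemma forces $M[J] = 0$. The contrapositive is exactly the stated dichotomy: if $M[J]\ne 0$, then $M[J]\not\subseteq xM$.

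No real obstacle is expected; the only substantive point is the cancellation step in the reverse inclusion, where one must carefully use that $x$ is a nonzerodivisor on every element of the form $jm$ (with $j\in J$) rather than on $m$ itself, and the invocation of Nakayama in the local case.
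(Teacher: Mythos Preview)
Your argument is correct and matches the paper's own proof: the paper only spells out the nontrivial inclusion $xM\cap M[J]\subseteq x(M[J])$ via the same cancellation of $x$, and then invokes Nakayama's Lemma on $M[J]$ for the second assertion. Your write-up is slightly more detailed (you make the easy inclusion and the noetherian hypothesis explicit), but the approach is identical.
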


\begin{proof}
Indeed, for any $m$ in $M$ if $(xm)\cdot J=0$, then $m \cdot J=0$ for $x$ is not a zerodivisor on $M$, and hence $ m$ is in $M[J]$, as desired. The second part of the claim is by Nakayama's Lemma, applied to $M[J]$.
\end{proof}

\subsection*{Congruence modules}
Let $A$ be an object in $\acat$ and $M$ a finitely generated $A$-module. As in the Introduction, the \emph{congruence module} of $M$ is 
\[
\cmod A(M)\colonequals \frac M{M[\fp_A] + M[I_A]}\,.
\]
We write $\cmod A$, instead of $\cmod A(A)$;  observe that this agrees with the definition introduced in \ref{ch:catA}, for $A[I_A]=\fp_A$, by Lemma~\ref{le:pi-duality}.  Evidently $\cmod A(M)$ is a finitely generated module over the congruence algebra $\cmod A$.  Next we describe a canonical presentation of the congruence module.

Observe that the $A$-modules $M[\fp_A]$ and $M/M[I_A]$ are annihilated by $\fp_A$ and hence are naturally $\mco$-modules. 

\begin{lemma}
\label{le:M-sub-free}
When $\depth_AM\ge 1$, the  $\mco$-modules $M[\fp_A]$ and $M/M[I_A]$ are free of the same rank, and the following natural sequence of $\mco$-modules is exact:
\[
0\lra M[\fp_A] \lra \frac{M}{M[I_A]} \lra \cmod A(M) \lra 0\,.
\]
\end{lemma}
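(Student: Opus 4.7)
The plan is to read everything off the displayed sequence itself. The map $M[\fp_A]\to M/M[I_A]$ is the composite of the inclusion $M[\fp_A]\hookrightarrow M$ with the projection $M\twoheadrightarrow M/M[I_A]$; its kernel is $M[\fp_A]\cap M[I_A]$, which vanishes by Lemma~\ref{le:depth=1} in view of $\depth_A M\ge 1$. The image is then $(M[\fp_A]+M[I_A])/M[I_A]$, so the cokernel equals $M/(M[\fp_A]+M[I_A])=\cmod A(M)$ by definition, yielding exactness.

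Next I would verify that both outer modules are finitely generated over $\mco$. For $M[\fp_A]$ this is immediate; for $M/M[I_A]$ it follows from $\fp_A M\subseteq M[I_A]$, itself a consequence of $\fp_A\cdot I_A=0$. Since $\mco$ is a DVR, freeness reduces to showing each is $\varpi$-torsion-free. Fix a lift $\pi\in A$ of $\varpi$; because $A/\fp_A=\mco$, the maximal ideal of $A$ equals $\fp_A+(\pi)$. If $m\in M[\fp_A]$ satisfies $\pi m=0$, then $\fm_A m=0$, so $m$ is a socle element of $M$; but $\depth_A M\ge 1$ means $\fm_A$ contains a nonzerodivisor on $M$, which forces the socle to vanish, hence $m=0$.

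The main step is the torsion-freeness of $M/M[I_A]$, which I would handle by bootstrapping off the previous paragraph. Suppose $m\in M$ satisfies $\pi m\in M[I_A]$. For each $a\in I_A$ one has $\pi(am)=a(\pi m)=0$ and $\fp_A(am)=(\fp_A I_A)m=0$, so $am\in M[\fp_A]$ is killed by $\pi$; by the previous step, $am=0$. Letting $a$ range over $I_A$ gives $m\in M[I_A]$, as required. Equality of ranks then follows from the exact sequence together with the fact that $\cmod A(M)$ is a finitely generated $\mco$-module annihilated by the nonzero ideal $\lambda(I_A)$ (for any $a\in I_A$ and $m\in M$, the element $am$ lies in $M[\fp_A]\subseteq M[\fp_A]+M[I_A]$), hence has finite $\mco$-length. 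The bootstrapping step is the only one requiring any real ingenuity; everything else is essentially formal once Lemma~\ref{le:depth=1} is in hand.
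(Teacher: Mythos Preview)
Your proof is correct, and the overall structure---exactness from Lemma~\ref{le:depth=1}, then torsion-freeness of each outer term---matches the paper's. The one genuine difference is in how torsion-freeness of $M/M[I_A]$ is obtained. The paper picks an arbitrary non-unit $x\in A$ that is a nonzerodivisor on $M$ and invokes Lemma~\ref{le:ideal-computation}: if $xm\in M[I_A]$ then $xm\in xM\cap M[I_A]=x(M[I_A])$, so $m\in M[I_A]$ since $x$ is regular. You instead fix the specific lift $\pi$ of $\varpi$ and bootstrap off the already-established torsion-freeness of $M[\fp_A]$, using that $I_A m\subseteq M[\fp_A]$ and $\pi$ kills $I_A m$. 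Your route avoids Lemma~\ref{le:ideal-computation} entirely and is pleasantly self-contained; the paper's route is more uniform (any regular element works) and recycles a lemma used elsewhere. Both arguments are short and essentially equivalent in difficulty. Your explicit justification that $\cmod A(M)$ has finite length, and hence that the ranks agree, is a point the paper leaves implicit.
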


\begin{proof}
The exactness of the sequence is immediate from the definition of $\cmod A$ and Lemma~\ref{le:depth=1}.  The main task is to verify the claims about freeness. Since  $M[\fp_A]$ is an $A$-submodule of $M$ and the latter has positive depth, so does the former. The $A$-action on $M[\fp_A]$ factors through $\mco$ so   $\depth_{\mco}M[\fp_A]\ge 1$ and so $M[\fp_A]$ is free.

Since $\depth_AM\ge 1$ there exists a non-unit element $x\in A$ that is not a zerodivisor on $M$.  We claim that $x$ is not a zerodivisor on $M/M[I_A]$ as well. 

Indeed, suppose $x$ annihilates the residue class in $M/M[I_A]$ of an element $m\in M$, that is to say, $xm\in M[I_A]$. Then 
\[
xm\in xM \cap M[I_A]= x(M[I_A])\,,
\]
where the equality is by Lemma~\ref{le:ideal-computation}. Thus $xm=xm'$ for some $m'\in M[I_A]$, and hence $m=m'$, as $x$ is not a zerodivisor on $M$. Thus $m$ is zero in $M/M[I_A]$.

Since $M/M[I_A]$ has positive depth it too is free as an $\mco$-module.
\end{proof}

\begin{chunk}
\label{ch:bcmod}
We also consider the following variant of the congruence module:
\[
\bcmod A(M)\colonequals \frac M{M[I_A] + I_A M}\,.
\]
Lemma~\ref{le:pi-duality} implies that $\bcmod A(A)=\cmod A$, so that $\bcmod A(M)$ is a $\cmod A$-module, namely a quotient of the $\cmod A$-module $\cmod A\otimes_AM$, and they are equal when  $M$ is free as an $A$-module. It is easy to verify that there is an exact sequence of $\cmod A$-modules:
\begin{equation}
\label{eq:bcmod-cmod}
0\lra \frac{M[\fp_A]}{ I_AM + (M[\fp_A]\cap M[I_A]) } \lra \bcmod A(M) \lra \cmod A(M)\lra 0\,.
\end{equation}
Keep in mind  that $M[\fp_A]\cap M[I_A]=0$ when $\depth_AM\ge 1$, by Lemma~\ref{le:depth=1}.
\end{chunk}

These observations serve to establish the connection between the Wiles defect~\eqref{eq:Wiles-defect} of $A$ and of $M$. This settles  the first part of Theorem~\ref{intro:defect}. As in \emph{op.\ cit.} one could state the equality in terms of congruence modules of $A$ and $M$.

\begin{theorem}
\label{th:delta-AM}
Let $A$ be an object in $\acat$ and $M$ a finitely generated $A$-module. When $\depth_AM\ge 1$, there is an equality
\[
\delta_{A}(M)  = (\rank_{\mco} M[\fp_A])\cdot \delta_A(A) 	+  \length{\mco}{(M[\fp_A]/I_AM)} \,.
\]
In particular $\delta_A(M)\ge 0$.
\end{theorem}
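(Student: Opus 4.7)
The plan is to compute $\length{\mco}{\cmod A(M)}$ by routing it through the variant $\bcmod A(M)$ introduced in~\ref{ch:bcmod}, whose length admits a direct description. The two preliminary inputs are already in hand: Lemma~\ref{le:M-sub-free} supplies the exact sequence
\[
0 \lra M[\fp_A] \lra M/M[I_A] \lra \cmod A(M) \lra 0
\]
in which the first two terms are free $\mco$-modules of common rank $d\colonequals \rank_\mco M[\fp_A]$, while the hypothesis $\depth_A M\ge 1$ combined with Lemma~\ref{le:depth=1} reduces the sequence~\eqref{eq:bcmod-cmod} to
\[
0 \lra M[\fp_A]/I_A M \lra \bcmod A(M) \lra \cmod A(M) \lra 0.
\]
Once $\length{\mco}{\bcmod A(M)}$ is computed, additivity of length in the second sequence will deliver the formula for $\length{\mco}{\cmod A(M)}$, and hence for $\delta_A(M)$.

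The key step is the identification $\bcmod A(M) \cong (M/M[I_A])\otimes_\mco \cmod A$. The reason is that $\fp_A$ annihilates $M/M[I_A]$, so the $A$-action on this quotient factors through $\mco$; consequently $I_A$ acts on $M/M[I_A]$ via its image $\lambda(I_A)\subseteq \mco$, and the image of $I_A M$ in $M/M[I_A]$ coincides with $\lambda(I_A)\cdot (M/M[I_A])$. Dividing $M/M[I_A]$ by this submodule produces $\bcmod A(M)$, and since $M/M[I_A]$ is $\mco$-free of rank $d$, the quotient is isomorphic to $(\cmod A)^d$. Therefore $\length{\mco}{\bcmod A(M)} = d\cdot \length{\mco}{\cmod A}$.

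Inserting this into the second short exact sequence above yields
\[
\length{\mco}{\cmod A(M)} = d\cdot \length{\mco}{\cmod A} - \length{\mco}{M[\fp_A]/I_A M},
\]
and substitution into the definition~\eqref{eq:Wiles-defect} gives the stated equality for $\delta_A(M)$. Non-negativity is then immediate: the error term $\length{\mco}{M[\fp_A]/I_A M}$ is non-negative by construction, and $\delta_A(A)\ge 0$ is the Wiles inequality~\eqref{eq:Wiles-inequality}. The only genuinely non-formal step is the identification of $\bcmod A(M)$ in the second paragraph; the rest is a bookkeeping exercise with length and the two exact sequences, both of which have already been set up in Section~\ref{se:congruence}.
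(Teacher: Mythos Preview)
Your proof is correct and follows essentially the same approach as the paper: both compute $\length{\mco}{\bcmod A(M)}$ via the isomorphism $\bcmod A(M)\cong (M/M[I_A])\otimes_{\mco}\cmod A$ (the paper states this as an observation, and you supply the one-line justification that $\fp_A$ annihilates $M/M[I_A]$), then combine this with the exact sequence~\eqref{eq:bcmod-cmod} and Lemma~\ref{le:M-sub-free}. The only point you leave implicit that the paper makes explicit is that $\rank_{\mco}A[\fp_A]=1$ (from~\eqref{eq:Irank}), needed to identify $\length{\mco}{\Phi_A}-\length{\mco}{\cmod A}$ with $\delta_A(A)$.
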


\begin{proof}
Observe that for any $A$-module $M$ there is an isomorphism of $\cmod A$-modules
\[
\bcmod A(M) \cong \cmod A \otimes_{\mco} \frac{M}{M[I_A]}\,.
\]
Since $\depth_AM\ge 1$, the $\mco$-modules $M/M[I_A]$ and $M[\fp_A]$ are free  of the same rank, by Lemma~\ref{le:M-sub-free}, so the isomorphism above yields  the equality
\[
\length{\mco}{\bcmod A(M)} = (\length{\mco}{\cmod A}) d,
\]
where $d\colonequals \rank_{\mco} M[\fp_A]$. This justifies the third equality below.
\begin{align*}
\delta_A(M) 
	&= d\cdot \length{\mco}{\Phi_{A}} -\length{\mco}{\cmod A(M)} \\
	&=d \cdot \length{\mco}{\Phi_{A}} - \length{\mco}{\bcmod A(M)} + \length{\mco}{(M[\fp_A]/I_AM)} \\
	&= d\cdot \length{\mco}{\Phi_{A}} - d \cdot \length{\mco}{\cmod A} + \length{\mco}{(M[\fp_A]/I_AM)} \\
	&= d\cdot \delta_A(A) + \length{\mco}{(M[\fp_A]/I_AM)}\,.
\end{align*}
The first and the last equalities are the definition of defects~\eqref{eq:Wiles-defect} whilst the second one is from \eqref{eq:bcmod-cmod}, and Lemma~\ref{le:depth=1}.  The last equality also uses the observation that the rank of the $\mco$-module $A[\fp_A]=I_A$ is one; see \eqref{eq:Irank}.

The last conclusion holds because $\delta_A(A)\ge 0$; see \eqref{eq:Wiles-inequality}.
\end{proof}

\begin{chunk}
It follows from the proof above that when $\depth_AM\ge 1$ and $\rank_{\mco}M[\fp_A]=1$, there is a surjective map $\cmod A\twoheadrightarrow \cmod A(M)$, with kernel $M[\fp_A]/I_AM$.
\end{chunk}

In the next lemma we show that  the congruence module  for modules $M$  of positive depth remains invariant under pull-back of rings. We use this  in the proof of Theorem \ref{th:Diamond}.

\begin{lemma}
\label{le:delta-invariance}
Let $A\to B$ be a surjective map in $\acat$ and $M$ a finitely generated $B$-module. One has a natural surjection
\[
\cmod B(M) \twoheadrightarrow \cmod A(M)\,.
\]
This map is bijective when $\depth_BM\ge 1$, and then $\delta_A(M)\ge \delta_B(M)$. Moreover $\delta_A(M)=\delta_B(M)$ if and only if $\length{\mco}{\con A}=\length{\mco}{\con B}$.
\end{lemma}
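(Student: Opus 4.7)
First I would unpack what surjectivity of $\vf\colon A\to B$ gives on the relevant ideals. Since $\lambda_B\vf=\lambda_A$ and $\vf$ is surjective, $\vf(\fp_A)=\fp_B$, so for any $B$-module $M$ (viewed as an $A$-module via $\vf$) the identity $M[\fp_A]=M[\fp_B]$ holds. Moreover $\vf(I_A)\cdot\fp_B=\vf(I_A\fp_A)=0$, so $\vf(I_A)\subseteq I_B$ and hence $M[I_B]\subseteq M[I_A]$. Combining, $M[\fp_B]+M[I_B]\subseteq M[\fp_A]+M[I_A]$, which on passing to quotients produces the natural surjection $\cmod B(M)\twoheadrightarrow\cmod A(M)$ and proves the first assertion.

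Next I would identify the kernel of this map and show it vanishes when $\depth_B M\ge 1$. Note that $\depth_AM=\depth_BM$, since a sequence in $\fm_B$ is $M$-regular iff any lift to $\fm_A$ is. A short diagram chase using $M[\fp_A]\cap M[I_A]=0$ from Lemma~\ref{le:depth=1} identifies the kernel of $\cmod B(M)\twoheadrightarrow\cmod A(M)$ with $M[I_A]/M[I_B]$. To see this quotient is zero, apply Lemma~\ref{le:M-sub-free} to both $A$ and $B$: it gives that $M/M[I_A]$ and $M/M[I_B]$ are free $\mco$-modules of rank $d\colonequals\rank_{\mco}M[\fp_A]=\rank_{\mco}M[\fp_B]$. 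The inclusion $M[I_B]\subseteq M[I_A]$ induces a surjection $M/M[I_B]\twoheadrightarrow M/M[I_A]$ between free $\mco$-modules of the same finite rank, which must be an isomorphism. Hence $M[I_A]=M[I_B]$ and the surjection $\cmod B(M)\to\cmod A(M)$ is bijective. The only nontrivial step in the whole argument is this equality $M[I_A]=M[I_B]$; the equal-rank trick above is what reduces it to the general-nonsense fact that a surjective endomorphism of a finitely generated module is bijective.

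Finally, the defect comparison is a matter of unwinding~\eqref{eq:Wiles-defect}. Since the congruence modules for $A$ and $B$ coincide after the bijection, one obtains
\[
\delta_A(M)-\delta_B(M)=d\bigl(\length{\mco}{\con A}-\length{\mco}{\con B}\bigr).
\]
Because $\vf$ induces a surjection $\fp_A/\fp_A^2\twoheadrightarrow\fp_B/\fp_B^2$ of finite length $\mco$-modules, i.e.\ $\con A\twoheadrightarrow\con B$, the right-hand side is nonnegative, so $\delta_A(M)\ge\delta_B(M)$, with equality if and only if $\length{\mco}{\con A}=\length{\mco}{\con B}$.
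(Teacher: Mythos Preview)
Your proof is correct. The first and last parts essentially match the paper's argument. For the key step of showing $M[I_A]=M[I_B]$ when $\depth_B M\ge 1$, you take a genuinely different route. The paper observes that $I_B/(I_AB)$ is annihilated by both $\fp_A$ and $I_A$, hence has finite length, so $\Hom{B}{I_B/(I_AB)}{M}=0$ by the depth hypothesis; the exact sequence
\[
0\lra M[I_B]\lra M[I_A]\lra \Hom{B}{I_B/(I_AB)}{M}
\]
then forces equality directly. Your argument instead invokes Lemma~\ref{le:M-sub-free} over both $A$ and $B$ to obtain a surjection $M/M[I_B]\twoheadrightarrow M/M[I_A]$ between free $\mco$-modules of equal rank, which must be bijective. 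The paper's approach is more self-contained and avoids both the diagram chase identifying the kernel and the appeal to Lemma~\ref{le:M-sub-free}; yours leans on that structural lemma, but has the pleasant byproduct of showing that $M/M[I_A]$ and $M/M[I_B]$ are actually isomorphic as $\mco$-modules, not merely of equal length.
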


\begin{proof}
Since $\fp_A B=\fp_B$ one has  $I_AB\subseteq I_B$, so there is an exact sequence 
\[
0\lra \frac{I_B}{I_AB} \lra \frac B{I_AB}\lra \frac B{I_B} \lra 0
\]
of $B$-modules. Applying $\Hom B{-}M$ yields an exact sequence
\[
0\lra M[I_B] \lra M[I_A] \lra \Hom B{\frac{I_B}{I_AB}}M
\]
of $B$-modules. Since $M[\fp_B]=M[\fp_A]$, the first part of the statement is immediate for the inclusion $M[I_B]\subseteq M[I_A]$ and the definition of congruence modules. 

Observe that $I_B/I_AB$ is annihilated by $\fp_A$ and $I_A$ and hence it has finite length over $A$, so also over $B$. Thus when $\depth_BM\ge 1$ one has $\Hom B{I_B/I_AB}M=0$, so $M[I_B]=M[I_A]$. This gives the desired isomorphism.

The inequality of Wiles defects is clear since the map $\con A\to\con B$ is surjective~\cite[\S5.2]{Darmon/Diamond/Taylor:1997} as is the statement about equality.
\end{proof}

\section{Criteria for freeness}
In this section we relate the freeness of a module over a local ring in $\acat$ to numerical invariants associated with its congruence module. The main result here is Theorem~\ref{th:prediamond}. In addition to the results on congruence modules presented in Section~\ref{se:congruence}, its proof uses the following criterion for freeness of modules over Gorenstein local rings of Krull dimension zero. 

\begin{lemma}
\label{le:prediamond0}
Let $R$ be a Gorenstein local ring with maximal ideal $\fm$, and of Krull dimension zero. A  finitely generated $R$-module $M$ is free if and only if 
\[
\length RM \le \length R{(R[\fm]\cdot M)} {\length{}R}\,.
\]
\end{lemma}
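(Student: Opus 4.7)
The forward direction is immediate: if $M\cong R^r$, then $R[\fm]\cdot M\cong R[\fm]^r$, and since $R[\fm]$ is the socle of $R$, which is one-dimensional over $k:=R/\fm$ by the Gorenstein hypothesis, this submodule has length $r$ over $R$; thus both sides of the inequality equal $r\cdot \length RR$.

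For the converse, I fix a minimal free presentation $0\to K\to R^\mu\to M\to 0$ with $\mu$ the minimal number of generators of $M$, so $K\subseteq \fm R^\mu$. Since the socle of $R^\mu$ is exactly $R[\fm]^\mu$, I get $K\cap R[\fm]^\mu=K[\fm]$, and additivity of length yields
\[
\length RM=\mu\cdot\length RR-\length RK\quad\text{and}\quad \length R{(R[\fm]\cdot M)}=\mu-\length R{K[\fm]}.
\]
Substituting these into the hypothesis reduces it to the inequality
\[
\length R{K[\fm]}\cdot \length RR \,\le\, \length RK.
\]
The plan is to prove the reverse inequality always holds, so that equality must occur and this forces $K$ to be free.

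For the reverse inequality, I invoke Matlis duality. Since $R$ is zero-dimensional Gorenstein, $D(-):=\Hom R{-}R$ is an exact, length-preserving, involutive duality on finitely generated $R$-modules, sending $R$ to $R$ and $k$ to $R[\fm]$. For any finite $R$-module $N$, the adjunction $\Hom RkN\cong \Hom R{D(N)}{R[\fm]}$ combined with $R[\fm]\cong k$ and the fact that maps to $R[\fm]$ factor through $D(N)/\fm D(N)$ gives $\length R{N[\fm]}=\mu(D(N))$. Applying this to $N=K$, the universal bound $\length RL\le \mu(L)\cdot \length RR$ (valid for any finite $L$, with equality iff $L$ is free, since $R^{\mu(L)}\twoheadrightarrow L$) specialized to $L=D(K)$ becomes $\length RK\le \length R{K[\fm]}\cdot\length RR$, with equality iff $D(K)$, equivalently $K$, is free. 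Together with the reduced hypothesis, $K$ must be free.

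Finally, $K\subseteq \fm R^\mu$ and $\fm$ is nilpotent in the Artinian local ring $R$. If $\fm=0$ then $K=0$ immediately; otherwise choose $t\ge 2$ minimal with $\fm^t=0$, so $\fm^{t-1}\ne 0$ annihilates every element of $\fm R^\mu$, hence annihilates $K$. Since a nonzero free module is faithful, $K=0$, and therefore $M\cong R^\mu$ is free. The main obstacle I foresee is the duality step: one must carefully identify the socle length $\length R{K[\fm]}$ with the minimal number of generators $\mu(D(K))$ so that the universal length bound applied to $D(K)$ delivers precisely the reverse of the reduced hypothesis; the other steps are bookkeeping.
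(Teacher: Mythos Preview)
Your argument is correct. It is, however, a genuinely different route from the paper's. The paper strips off free summands and then embeds $M$ in its injective hull $F$; since $R$ is zero-dimensional Gorenstein, $F$ is free, and since $M$ has no free summands $M\subseteq\fm F$, whence $R[\fm]\cdot M\subseteq R[\fm]\cdot\fm F=0$ and the hypothesis forces $M=0$. Your proof instead passes to the first syzygy $K$, reduces the hypothesis to $\length R{K[\fm]}\cdot\length RR\le\length RK$, and then uses Matlis duality to show the reverse inequality always holds with equality precisely when $K$ is free; minimality of the presentation then kills $K$. Both arguments ultimately rest on the self-injectivity of $R$, but the paper uses it through injective hulls while you use it through the duality $D=\Hom R{-}R$. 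The paper's proof is shorter and more direct; your approach has the mild advantage of isolating the general inequality $\length RN\le\length R{N[\fm]}\cdot\length RR$ (with equality iff $N$ is free) for \emph{any} finite $N$, which is a pleasant byproduct.
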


\begin{proof}
The ideal $R[\fm]$ is the socle of the ring $R$ and since the ring is Gorenstein and of dimension zero, one has $R[\fm]\cong k$; see \cite[Theorem~3.2.10]{Bruns/Herzog:1998}. Both sides of the desired inequality are additive on direct sums of modules, and coincide on $R$, by the remark about socles.
Thus we can assume $M$ has no free summands. Consider the injective hull $M\subseteq F$ of $M$. Since $R$ is Gorenstein of dimension zero, the only indecomposable injective module is $R$ itself; see~\cite[Proposition~3.2.12(e)]{Bruns/Herzog:1998}. Thus the  $R$-module $F$ is free, and since $M$ has not free summands $M\subseteq \fm F$. Therefore
\[
R[\fm] \cdot M \subseteq R[\fm]\cdot (\fm F)=0\,.
\]
The hypothesis thus yields $\length RM=0$ and so $M=0$. 
\end{proof}

In the sequel we use some basic results, recalled below, from the theory of multiplicities for modules over local rings; for details see \cite[Chapter 4]{Bruns/Herzog:1998}.

\begin{chunk}
\label{ch:multiplicity}
Let $A$ be a local ring with maximal ideal $\fm_A$ and $M$ a finitely generated $A$-module. We write $e_A(M)$ for the Hilbert-Samuel multiplicity with respect to $\fm_A$ of the $A$-module $M$; see \cite[\S4.6]{Bruns/Herzog:1998}. For $M=A$ we write $e(A)$ instead of $e_A(A)$. Here are the crucial facts about multiplicities.

When $A$ has Krull dimension zero, then $e_A(M)=\length{A}M$.

When $A$ is a finite $\mco$-algebra, then $e_A(M) = (\rank_{\mco}M)(\rank_{\mco} A)$.

One has $e_A(M)\ge 0$ with strict inequality if and only if $\dim M=\dim A$. This also follows from the additivity formula \eqref{eq:local-multiplicity} below.

Set $\Lambda\colonequals \{\fq\in\mathrm{Spec}\, A\mid \dim (A/\fq)=\dim A\}$; these are the prime ideals corresponding to the components of $\mathrm{Spec}\, A$ of maximal dimension. There is an equality
\begin{equation}
\label{eq:local-multiplicity}
e_A(M) = \sum_{\fq\in \Lambda} (\length{A_\fq}{M_\fq})  e(A_\fq)\,.
\end{equation}
In particular if the rank of $M$ at each $\fq$ in $\Lambda$ is at most that at $\fp_A$, then
\begin{equation}
\label{eq:bound-multiplicity}
e_A(M) \le (\rank_{\mco}{M[\fp_A]})(\sum_{\fq\in \Lambda}  e(A_\fq)) = (\rank_{\mco}{M[\fp_A]})\cdot e(A)\,.
\end{equation}
This holds in particular when $M\colonequals B$ for any surjective map $A\twoheadrightarrow B$ in $\acat$ because one has $B_{\fp_A}=B_{\fp_B}=\mco$, and the rank of $B$ at any $\fq$ in $\Lambda$ is at most one.
\end{chunk}

\begin{lemma}
\label{le:socle}
Let $A\in \acat$ be a Gorenstein ring and $x\in A$ a nonzerodivisor such that $\lambda_A(x)$ is a uniformising parameter for $\mco$. The ring $R\colonequals A/xA$ is Gorenstein with socle equal to $I_A\cdot R$.
\end{lemma}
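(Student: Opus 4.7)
The strategy is to first pin down the Krull dimension of $A$, then reduce everything to the socle computation in a zero-dimensional Gorenstein local ring. Since $A$ is Gorenstein one has $\depth A=\dim A$, and Lemma~\ref{le:depth} combined with the surjection $\lambda_A\colon A\to\mco$ forces $\dim A=1$. Thus $R=A/xA$ is a zero-dimensional local ring, and the standard fact that dividing by a nonzerodivisor preserves the Gorenstein property (see \cite[Proposition~3.1.19(b)]{Bruns/Herzog:1998}) gives that $R$ is Gorenstein. In particular the socle of $R$ is $R[\fm_R]$, and it is a one-dimensional $k$-vector space; cf.\ the discussion in the proof of Lemma~\ref{le:prediamond0}.

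The next step is to show the containment $I_A\cdot R \subseteq R[\fm_R]$. For this I would first observe that $\fm_A = \fp_A + xA$: any $m\in\fm_A$ has $\lambda_A(m)\in\varpi\mco$, and since $\lambda_A(x)=\varpi$ with $\lambda_A$ surjective one can write $\lambda_A(m)=\lambda_A(xa)$ for some $a\in A$, giving $m - xa \in \fp_A$. Using this together with $I_A\cdot\fp_A = 0$ one computes
\[
I_A\cdot \fm_A \;=\; I_A\cdot\fp_A + I_A\cdot xA \;=\; x\cdot I_A \;\subseteq\; xA,
\]
so every element of $I_A$ reduces to an element of $R[\fm_R]$.

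It remains to verify $I_A\cdot R\ne 0$, for then, being a nonzero subspace of the one-dimensional socle, it must equal the socle. Suppose for contradiction that $I_A\subseteq xA$. For each $a\in I_A$ one can write $a=xb$ with $b\in A$; then $xb\cdot\fp_A = a\cdot\fp_A = 0$, and since $x$ is a nonzerodivisor one gets $b\cdot\fp_A=0$, that is $b\in I_A$. Hence $I_A = x\,I_A$, and Nakayama's lemma gives $I_A=0$, contradicting the fact from \eqref{eq:Irank} that $I_A$ has $\mco$-rank one.

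The only non-routine point is the verification that $I_A\cdot R$ is nonzero; everything else reduces either to the bookkeeping around $\fm_A=\fp_A+xA$ or to classical facts about Gorenstein rings. Once the three ingredients (Gorenstein quotient, containment in the socle, nonvanishing) are in place, the conclusion is immediate from the one-dimensionality of the socle of a zero-dimensional Gorenstein local ring.
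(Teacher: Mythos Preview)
Your proof is correct and complete. It takes a somewhat different, more elementary route than the paper's.

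The paper argues via Rees' theorem \cite[Lemma~3.1.16]{Bruns/Herzog:1998}, obtaining isomorphisms $\Ext iRkR\cong\Ext{i+1}AkA$ for all $i$; this simultaneously yields the Gorenstein property of $R$ and identifies its socle $\Hom RkR$ with $\Ext 1AkA$. The latter is then computed from the sequence $0\to\mco\xrightarrow{\varpi}\mco\to k\to 0$ by applying $\Hom A{-}A$, using that $\Hom AkA=0$ and $\Ext 1A{\mco}A=0$ (since $A$ is Gorenstein and $\mco$ is maximal Cohen--Macaulay over $A$); this gives $\Ext 1AkA\cong I_A/\varpi I_A$, which is identified with $I_A\cdot R$ via Lemma~\ref{le:ideal-computation}. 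By contrast, you bypass Rees' theorem entirely: you use only the one-dimensionality of the socle of a zero-dimensional Gorenstein ring, verify the containment $I_A\cdot R\subseteq R[\fm_R]$ directly from the decomposition $\fm_A=\fp_A+xA$, and check nonvanishing by Nakayama (essentially redoing Lemma~\ref{le:ideal-computation} in this special case). Your argument is shorter and avoids any homological machinery beyond the definition of Gorenstein; the paper's argument, though heavier, makes the identification $R[\fm_R]\cong I_A/\varpi I_A$ canonical rather than a consequence of a dimension count.
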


\begin{proof}
Since $\dim A=1$, by Lemma~\ref{le:depth}, the ring $R$ is zero-dimensional, with maximal ideal $\fm_R \colonequals \fm_AR$.  Rees' theorem~\cite[Lemma~3.1.16]{Bruns/Herzog:1998} yields isomorphisms
\[
\Ext iRkR \cong \Ext {i+1}AkA \quad\text{for all $i$.}
\]
In particular the socle $R[\fm_R]$ of $R$ can be computed as follows:
\[
\Hom RkR\cong \Ext 1AkA \cong \frac{I_A}{\varpi I_A} = I_A\cdot R\,.
\]
The equality holds because $I_A\cap xA = x I_A = \varpi I_A$; see Lemma~\ref{le:ideal-computation}. This justifies the last part  of the result.
\end{proof}

Here is an analogue of Lemma~\ref{le:prediamond0} in the category $\acat$. By \eqref{eq:bound-multiplicity} the upper bound on $e_A(M)$ holds when the rank of $M$ at each generic point of $A$ is at most $d$. Thus the result below contains Theorem~\ref{intro:Diamond} from the Introduction. It was suggested by the arguments in the second half of the  proof of  ~\cite[Theorem~2.4]{Diamond:1997} to prove freeness of modules over complete intersection rings using numerical conditions. There do exist rings in $\acat$ that are Gorenstein but not complete intersection; see  \ref{ch:Gor-not-ci}.

\begin{theorem}
\label{th:prediamond}
Let $A \in\acat$ be a Gorenstein ring, $M$ a finitely generated $A$-module with $\depth_AM\ge 1$, and set $d\colonequals \rank_{\mco}M[\fp_A]$. The $A$-module $M$ is free if, and only if, there are inequalities
\[
\delta_A(M)\le d\cdot \delta_A(A) \qquad\text{and}\qquad e_A(M)\le d\cdot e(A)\,.
\]
\end{theorem}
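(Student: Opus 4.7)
The ``only if'' direction is immediate: if $M \cong A^r$, then $M[\fp_A] \cong I_A^r$ is $\mco$-free of rank $r$ by~\eqref{eq:Irank}, so $d = r$; moreover $I_A M = I_A^r = M[\fp_A]$, so Theorem~\ref{th:delta-AM} yields $\delta_A(M) = d\cdot\delta_A(A)$, while $e_A(M) = d\cdot e(A)$ follows from additivity of Hilbert--Samuel multiplicity along direct sums.

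For the ``if'' direction, the plan is to quotient $A$ and $M$ by a carefully chosen nonzerodivisor and apply the zero-dimensional freeness criterion Lemma~\ref{le:prediamond0}. The starting point is that Theorem~\ref{th:delta-AM} reads
\[
\delta_A(M) = d\cdot\delta_A(A) + \length{\mco}{(M[\fp_A]/I_A M)}\,,
\]
so the hypothesis $\delta_A(M) \le d\cdot\delta_A(A)$ collapses at once to $M[\fp_A] = I_A M$. Since $A$ is Gorenstein, Lemma~\ref{le:depth} forces $\depth A = \dim A = 1$. I would then pick $x \in A$ with $\lambda_A(x) = \varpi$ such that $x$ is regular on both $A$ and $M$ and $(x)$ generates a joint reduction of $\fm_A$ with respect to $A$ and to $M$. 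The regularity is arranged by prime-avoidance on the coset $\varpi + \fp_A$: any prime containing this coset would contain both $\fp_A$ and a preimage of $\varpi$, hence equal $\fm_A$, which is associated neither to $A$ nor to $M$. The joint-reduction condition is a generic one and can be arranged after passage to a faithfully flat extension with infinite residue field; all relevant invariants are stable under such a base change. These choices yield $\length{A}{(A/xA)} = e(A)$ and $\length{A}{(M/xM)} = e_A(M)$.

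Set $R \colonequals A/xA$. By Lemma~\ref{le:socle}, $R$ is a zero-dimensional Gorenstein ring with socle $R[\fm_R] = I_A R$. Using $I_A M = M[\fp_A]$ together with Lemma~\ref{le:ideal-computation}, one computes
\[
R[\fm_R]\cdot(M/xM) \;=\; \frac{I_A M}{I_A M \cap xM} \;=\; \frac{M[\fp_A]}{x\,M[\fp_A]} \;=\; \frac{M[\fp_A]}{\varpi\, M[\fp_A]} \;\cong\; k^d\,,
\]
where the last step uses Lemma~\ref{le:M-sub-free} and the fact that $x$ acts on $M[\fp_A]$ as $\varpi$. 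Hence $\length{R}{(R[\fm_R]\cdot(M/xM))} = d$, and the multiplicity hypothesis together with the length identifications of the previous paragraph give
\[
\length{R}{(M/xM)} = e_A(M) \le d\cdot e(A) = d\cdot\length{R}{R} = \length{R}{(R[\fm_R]\cdot(M/xM))}\cdot\length{R}{R}\,.
\]
Lemma~\ref{le:prediamond0} then asserts that $M/xM$ is free over $R$. A standard Nakayama-and-Tor argument finishes: lift an $R$-basis of $M/xM$ to a map $A^r \to M$, which is surjective by Nakayama; its kernel $K$ fits into $0 \to \Tor{1}{A}{M}{R} \to K/xK \to R^r \to M/xM \to 0$, and $\Tor{1}{A}{M}{R} = M[x] = 0$ by regularity of $x$ on $M$, so $K/xK = 0$ and $K = 0$ by Nakayama, whence $M \cong A^r$.

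The main obstacle is the choice of the parameter $x$: one has to fit four conditions into a single element---prescribed image under $\lambda_A$, regularity on $A$, regularity on $M$, and generation of a joint reduction of $\fm_A$. The last of these needs an infinite residue field, which is what forces the preliminary faithfully flat base change; everything else is a matter of bookkeeping with prime avoidance and the associativity formula for multiplicity.
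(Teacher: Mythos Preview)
Your argument is correct and tracks the paper's proof step for step: pass to $R=A/xA$ for a regular element $x$ with $\lambda_A(x)$ a uniformizer, identify the socle of $R$ as $I_AR$ via Lemma~\ref{le:socle}, translate the two hypotheses into the inequality of Lemma~\ref{le:prediamond0}, and lift freeness of $M/xM$ back to $M$. The only divergence is that you explicitly arrange $(x)$ to be a reduction of $\fm_A$ (after a faithfully flat extension to infinite residue field) in order to obtain $\length{}{(A/xA)}=e(A)$ and $\length{}{(M/xM)}=e_A(M)$; the paper asserts these equalities citing only that $x$ is a nonzerodivisor with $x\notin\fm_A^2$, so your extra care on this point is well placed.
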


Given Theorem~\ref{th:delta-AM} the inequality on the left is equivalent to $\delta_A(M)=d\cdot \delta_A(A) $.

\begin{proof}
The ``only if" direct is clear. As to the converse, the hypothesis $\delta_A(M)\le d\cdot \delta_A(A)$ is equivalent to 
$M[\fp_A]=I_AM$, by Theorem~\ref{th:delta-AM}.

Pick an element $x\in A$ mapping to a uniformizer for $\mco$ and such that $x$ is a nonzero divisor on both $A$ and $M$. This is possible as $A$ and $M$ have depth one. Set $R=A/xA$ and $N=M/xM$. Then $R$ is a Gorenstein local ring of dimension zero, with maximal ideal $\fm_A R$ and socle $R[\fm_A R] = I_AR$; see Lemma~\ref{le:socle}. Since $x$ is a nonzero divisor on $A$ and $M$,  and hence on $I_AM$, and not in $\fm_A^2$, one gets
 \[
e(A) = \length{}{R}\,, \quad e_A(M) = \length RN\,  \quad \text{and} \quad e_A(I_AM) = \length R{(I_AM/xI_AM)}\,.
 \]
We also have the following sequence of equalities
 \begin{align*}
 \length R{(I_AR \cdot N)} 
 	&= \length R{\frac{(I_AM + xM)}{xM}} \\
	&= \length R{\frac{I_AM}{(xM\cap I_AM)}} \\
	&=  \length R{(I_AM/x I_AM)}\,,
 \end{align*}
where the third one holds by the observation recorded in Lemma~\ref{le:ideal-computation}, applied with $J=\fp_A$ to $I_{A}M=M[\fp_A]$. Observe that $e_A(I_AM)=e_A(M[\fp_A])=d$, so the hypothesis on multiplicities translates to 
\[
\length RN \le \length R{(I_AR \cdot N)} \length {}R\,.
\]
By Lemma~\ref{le:socle}, the ring $R$ is Gorenstein with socle $I_AR$, so Lemma~\ref{le:prediamond0} applies to yield the $R$-module $N$ is free. Thus the $A$-module $M$ is free.
\end{proof}

Theorem~\ref{th:prediamond} implies  the following  known isomorphism criteria; see \cite[Lemma A.8]{Fakhruddin/Khare/Ramakrishna:2021} and \cite[Theorem 5.21]{Darmon/Diamond/Taylor:1997}. We state it here for ease of reference  as its needed later, and our methods yield a new proof of it.

\begin{corollary}
\label{co:iso-criteria}
Let $\vf\colon A\to B$ be a surjective map in $\acat$. The map $\vf$ is an isomorphism under either of the following conditions:
\begin{enumerate}[\quad\rm(1)]
\item
$\length{\mco}{\cmod A}  = \length{\mco} {\cmod B}$, the ring $A$ is Gorenstein and $B$ is Cohen-Macaulay;
\item
$\length{\mco}{\con A}  = \length{\mco} {\con B}$ and $B$ is complete intersection.
\end{enumerate}
\end{corollary}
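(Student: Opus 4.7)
The strategy is to apply Theorem~\ref{th:prediamond} to the module $M\colonequals B$, regarded as an $A$-module via $\vf$, and then to reduce (2) to (1) using the Wiles--Lenstra characterization of complete intersections.

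For part (1), the plan is to verify the hypotheses of Theorem~\ref{th:prediamond} for $M=B$ one by one. Since $B$ is Cohen-Macaulay with $\dim B\ge 1$ (Lemma~\ref{le:depth}), the depth condition $\depth_A B=\depth_B B\ge 1$ holds. Surjectivity of $\vf$ gives $\fp_A B=\fp_B$, whence $B[\fp_A]=I_B$, which has $\mco$-rank $d=1$ by \eqref{eq:Irank}. Lemma~\ref{le:delta-invariance} then yields $\cmod A(B)\cong \cmod B$, so the hypothesis $\length{\mco}{\cmod A}=\length{\mco}{\cmod B}$ rewrites as $\delta_A(B)=\delta_A(A)=d\cdot\delta_A(A)$. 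The multiplicity inequality $e_A(B)\le d\cdot e(A)$ is precisely the observation recorded at the end of \ref{ch:multiplicity} for any surjective map $A\twoheadrightarrow B$ in $\acat$. Since $A$ is Gorenstein, Theorem~\ref{th:prediamond} concludes that $B$ is free as an $A$-module; as its generic rank is one, $B\cong A$, so $\vf$ is an isomorphism.

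For part (2), I would note that since $B$ is complete intersection, the Wiles--Lenstra theorem (Theorem~\ref{th:Wiles}) gives $\length{\mco}{\con B}=\length{\mco}{\cmod B}$. The inclusion $I_A B\subseteq I_B$ produces a surjection $\cmod A\twoheadrightarrow \cmod B$; combined with inequality \eqref{eq:Wiles-inequality} for $A$ and the given equality $\length{\mco}{\con A}=\length{\mco}{\con B}$, this yields
\[
\length{\mco}{\con A}\ge \length{\mco}{\cmod A}\ge \length{\mco}{\cmod B}=\length{\mco}{\con B}=\length{\mco}{\con A},
\]
forcing equalities throughout. The equality $\length{\mco}{\con A}=\length{\mco}{\cmod A}$ makes $A$ itself complete intersection (hence Gorenstein), the equality $\length{\mco}{\cmod A}=\length{\mco}{\cmod B}$ supplies the congruence-length hypothesis of (1), and complete intersection forces Cohen-Macaulay, so (1) applies. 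The argument is essentially bookkeeping once Theorem~\ref{th:prediamond} is available; the only points demanding a moment's care are the identification $B[\fp_A]=I_B$ and the isomorphism $\cmod A(B)\cong \cmod B$, both arising from surjectivity of $\vf$ via Lemma~\ref{le:delta-invariance}, together with the multiplicity-inequality input from \ref{ch:multiplicity}.
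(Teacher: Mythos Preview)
Your proof is correct and follows essentially the same approach as the paper's. Part~(1) is identical; for part~(2) you reduce to~(1) by running the chain of inequalities to show $A$ is complete intersection and $\length{\mco}{\cmod A}=\length{\mco}{\cmod B}$, whereas the paper obtains $\delta_A(B)=\delta_B(B)=0$ directly from Lemma~\ref{le:delta-invariance} and then invokes Theorem~\ref{th:prediamond} (the Gorenstein hypothesis on $A$ being supplied, as in your argument, by $\delta_A(A)=0$ via Theorem~\ref{th:delta-AM})---a cosmetic difference only.
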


\begin{proof}
The goal is to deduce that $B$ is free as an $A$-module, so $\Ker(\vf)=0$.

(1) Since $\cmod B\cong \cmod A(B)$, by Lemma~\ref{le:delta-invariance}, the hypothesis yields $\delta_A(B) = \delta_A(A)$. As $e_A(B)\le e(A)$ always holds---see \ref{ch:multiplicity}---we can apply Theorem~\ref{th:prediamond} to deduce that $B$ is free as an $A$-module. 

(2) Lemma~\ref{le:delta-invariance} yields the first equality below
\[
\delta_A(B) = \delta_B(B) = 0\,.
\]
The second one holds for $B$ is complete intersection. Apply Theorem~\ref{th:prediamond} again.
\end{proof}

\subsection*{Diamond's theorem}
We begin by recalling the result below from \cite[Proposition A.6]{Fakhruddin/Khare/Ramakrishna:2021}; it is used in the proof of Theorem~\ref{th:Diamond}(1).

\begin{theorem}
\label{th:Wiles} 
Let $\vf\colon A \to B$ in $\acat $ be a surjective map  with $\depth B\ge 1$. If $\delta_A(B)= 0$ then  $\vf$  is an isomorphism of complete intersections. \qed
\end{theorem}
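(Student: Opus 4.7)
The plan is to argue in three movements: deduce successively that $B$ and then $A$ are complete intersections, and finally unwind the ring presentations to conclude that $\vf$ itself is an isomorphism.

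First, Lemma~\ref{le:delta-invariance} gives $\delta_A(B) \ge \delta_B(B) \ge 0$, so the hypothesis $\delta_A(B) = 0$ forces $\delta_B(B) = 0$ together with $\length{\mco}{\con A} = \length{\mco}{\con B}$. Since $\rank_{\mco} B[\fp_B] = \rank_{\mco} I_B = 1$ by \eqref{eq:Irank}, the vanishing $\delta_B(B) = 0$ reads $\length{\mco}{\con B} = \length{\mco}{\cmod B}$, so $B$ is a complete intersection by the classical criterion of Wiles and Lenstra recalled before Lemma~\ref{le:pi-duality}. The inclusion $\vf(I_A) \subseteq I_B$ induces a surjection $\cmod A \twoheadrightarrow \cmod B$, and combining it with \eqref{eq:Wiles-inequality} gives the chain
\[
\length{\mco}{\con A} \ge \length{\mco}{\cmod A} \ge \length{\mco}{\cmod B} = \length{\mco}{\con B} = \length{\mco}{\con A},
\]
forcing all terms to coincide. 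Hence $A$ is also a complete intersection (Wiles--Lenstra again), and the finite-length surjections $\con A \twoheadrightarrow \con B$ and $\cmod A \twoheadrightarrow \cmod B$ are isomorphisms.

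Second, I would set up a common presentation. Choose a minimal generating set $x_1, \ldots, x_n$ of $\fp_A$; then $\vf(x_1), \ldots, \vf(x_n)$ generate $\fp_B$, and the isomorphism $\con A \cong \con B$ ensures $n$ is also the minimal number of generators of $\fp_B$. Completing, one may therefore write $\hat A = \mco\pos{x_1, \ldots, x_n}/J_A$ and $\hat B = \mco\pos{x_1, \ldots, x_n}/J_B$ with $J_A \subseteq J_B$ and $\hat\vf$ the canonical quotient. Both rings being complete intersections of Krull dimension $1$ inside the regular local ring $\mco\pos{\bs x}$ of dimension $n+1$, their defining ideals are generated by regular sequences of length exactly $n$, say $(f_1, \ldots, f_n)$ for $J_A$ and $(g_1, \ldots, g_n)$ for $J_B$; then $J_A \subseteq J_B$ furnishes a matrix $C = (C_{ij}) \in M_n(\mco\pos{\bs x})$ with $f_i = \sum_j C_{ij} g_j$.

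Third, I would pass to linear parts. Writing $f_i \equiv \sum_j a_{ij} x_j \pmod{(\bs x)^2}$ and $g_j \equiv \sum_k b_{jk} x_k \pmod{(\bs x)^2}$ with $a_{ij}, b_{jk} \in \mco$ as in \ref{ch:example}, reducing $f_i = \sum_j C_{ij} g_j$ modulo $(\bs x)^2$ yields the identity $A_0 = C(0) \cdot B_0$ in $M_n(\mco)$, where $C(0)$ denotes the constant-term matrix of $C$. Taking determinants gives $\det A_0 = \det C(0) \cdot \det B_0$, and combining this with the identification $\length{\mco}{\con A} = \length{\mco}{\mco/(\det A_0)}$ from \ref{ch:example} (and analogously for $B$) and the equality of conormal lengths from the first step forces $\det C(0)$ to be a unit in the DVR $\mco$. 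Hence $C$ is invertible over $\mco\pos{\bs x}$, which gives $J_B \subseteq J_A$ and so $J_A = J_B$, so $\hat\vf$ and therefore $\vf$ is an isomorphism. The main obstacle is this last step: converting the numerical equality $\length{\mco}{\con A} = \length{\mco}{\con B}$ into the ideal equality $J_A = J_B$ relies on arranging a common presentation and on using the DVR structure on $\mco$ to upgrade equality of $\mco$-lengths to unit-ness of $\det C(0)$.
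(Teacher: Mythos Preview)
Your first movement is fine: Lemma~\ref{le:delta-invariance} gives $\delta_A(B)\ge\delta_B(B)\ge 0$, so $\delta_B(B)=0$ and $\length{\mco}{\con A}=\length{\mco}{\con B}$, and invoking the classical Wiles--Lenstra criterion for $B$ is legitimate since $\depth B\ge 1$. (One should be aware, though, that within the paper this criterion is itself the special case $\vf=\mathrm{id}$ of the theorem you are proving, so you are really importing it from \cite{Wiles:1995,Lenstra:1995}.) The genuine gap is in your second movement, where you apply Wiles--Lenstra to $A$: from $\length{\mco}{\con A}=\length{\mco}{\cmod A}$ you conclude that $A$ is a complete intersection. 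That implication requires $\depth A\ge 1$ (in the original sources $A$ is finite flat over $\mco$), and nothing in the hypotheses provides this. The ring $A=\mco\pos{x}/(\varpi x,x^2)$ of \ref{ch:depth0} has $\con A\cong\cmod A\cong k$, so $\delta_A(A)=0$, yet $A$ is not a complete intersection; here $\depth A=0$. Without knowing $A$ is a complete intersection you lose $\dim A=1$, and then your third movement collapses: $J_A$ need not be generated by a regular sequence of length $n$, the matrix $C$ need not be square, and the determinant comparison is unavailable.

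For comparison, the paper does not give a complete proof either: Theorem~\ref{th:Wiles} is quoted from \cite{Fakhruddin/Khare/Ramakrishna:2021}, and the argument at the end of Section~\ref{se:Venkatesh} treats only the case $\dim B=1$. That argument avoids precisely the obstacle above: it first replaces $A$ by $A/(\Ker\vf)^2$ to force $\dim A=1$, then by $A/\gam{\fm_A}(A)$, and then applies Venkatesh's formula (Theorem~\ref{th:Venkatesh}) to a minimal complete-intersection presentation $C\to A$ to conclude directly that $\Ker(C\to A)=0$, so that $A=C$ is a complete intersection without ever needing to know $\depth A\ge 1$ in advance. Corollary~\ref{co:iso-criteria}(1) then finishes. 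If you want to salvage your approach in full generality, you would need an independent argument bridging from $\length{\mco}{\con A}=\length{\mco}{\con B}$ with $B$ complete intersection to $\vf$ being an isomorphism, without first establishing that $A$ is a complete intersection.
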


Given \eqref{eq:bound-multiplicity}, it is immediate that the result below contains~\cite[Theorem~2.4]{Diamond:1997}, which was proved under the additional hypotheses that $A$ is an $\mco$-algebra and $M$ is a finite free $\mco$-module.

\begin{theorem}
\label{th:Diamond}
Let $A$ be an object in $\acat$ and $M$ a nonzero finitely generated $A$-module supported at $\fp_A$, and with $\depth_AM\ge 1$.
\begin{enumerate}[\quad\rm(1)]
\item
If $\delta_A(M)= 0$, then the ring $A$ is complete intersection, $M$ is faithful,  and $M[\fp_A]=I_AM$.  
\item
If moreover  $e_A(M)\le (\rank_{\mco}M[\fp_A]) e(A)$, then the $A$-module $M$ is free.
\end{enumerate}
\end{theorem}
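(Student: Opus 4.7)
The first move is to apply the decomposition of Theorem~\ref{th:delta-AM}. Writing $d \colonequals \rank_{\mco} M[\fp_A]$, which is positive since $M$ is supported at $\fp_A$, the hypothesis $\delta_A(M)=0$ together with
\[
\delta_A(M) = d\cdot \delta_A(A) + \length{\mco}{(M[\fp_A]/I_A M)}
\]
and the non-negativity of both summands forces $\delta_A(A)=0$ and $M[\fp_A]=I_A M$. This already yields the third assertion of part~(1), and reduces the problem to showing that $A$ is complete intersection and $M$ is faithful.

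For this I would introduce the quotient $B \colonequals A/\ann_A M$. Since $M$ is supported at $\fp_A$ one has $\ann_A M \subseteq \fp_A$, so $B$ lies in $\acat$ and $A\twoheadrightarrow B$ is a morphism there. By construction $M$ is faithful as a $B$-module; a short computation then shows that every $M$-nonzerodivisor in $B$ is a $B$-nonzerodivisor, so $\depth B \ge \depth_B M \ge 1$. Lemma~\ref{le:delta-invariance} gives $0 = \delta_A(M) \ge \delta_B(M) \ge 0$, whence $\delta_B(M)=0$, and a second application of Theorem~\ref{th:delta-AM} over $B$ then forces $\delta_B(B)=0$. Theorem~\ref{th:Wiles} applied to the identity map on $B$ now says that $B$ is complete intersection.

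To upgrade this conclusion from $B$ to $A$, I would invoke the equality clause of Lemma~\ref{le:delta-invariance}: the equality $\delta_A(M)=\delta_B(M)$ implies $\length{\mco}{\con A}=\length{\mco}{\con B}$, and since $B$ is complete intersection, Corollary~\ref{co:iso-criteria}(2) promotes the surjection $A\twoheadrightarrow B$ into an isomorphism. Thus $\ann_A M=0$, so $M$ is faithful and $A=B$ is complete intersection, which completes part~(1). Part~(2) then follows almost immediately: part~(1) supplies $\delta_A(A)=\delta_A(M)=0$ with $A$ Gorenstein, and together with the multiplicity hypothesis $e_A(M)\le d\cdot e(A)$ both inequalities of Theorem~\ref{th:prediamond} are satisfied, so $M$ is free.

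The main obstacle I anticipate is the middle step: spotting that $B=A/\ann_A M$ is the correct auxiliary ring, and verifying that the faithfulness of $M$ propagates a positive depth from $M$ to $B$ so that Theorem~\ref{th:Wiles} becomes applicable. Once that depth condition is in hand, the remainder of the argument is a clean chain of citations---Lemma~\ref{le:delta-invariance}, Theorem~\ref{th:Wiles}, Corollary~\ref{co:iso-criteria}, and Theorem~\ref{th:prediamond}---already prepared in the earlier sections.
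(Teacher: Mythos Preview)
Your argument is correct and follows essentially the same path as the paper's proof: the ring $B=A/\ann_A M$ is exactly the image of $A$ in $\End AM$ used there, the depth transfer $\depth B\ge 1$ is handled by the same faithfulness trick, and the chain Lemma~\ref{le:delta-invariance} $\Rightarrow$ Theorem~\ref{th:delta-AM} $\Rightarrow$ Theorem~\ref{th:Wiles} is identical. The only cosmetic difference is that you first apply Theorem~\ref{th:Wiles} to the identity on $B$ and then invoke Corollary~\ref{co:iso-criteria}(2), whereas the paper applies Theorem~\ref{th:Wiles} directly to $A\to B$; one small caution is that your displayed inequality $\depth B\ge \depth_B M$ is stronger than what your argument actually proves (faithfulness need not survive modding out by a regular element), but you only need and only use $\depth B\ge 1$, which your argument does establish.
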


\begin{proof}
Let $B$ denote the image of $A$ in $\End AM$. Since $M$ is supported at $\fp_A$, the canonical surjection $A\to B$ is in $\acat$. 
Since $M$ has positive depth, so does $\End AM$, and hence also $B$. Thus Lemma~\ref{le:delta-invariance} applies and gives the first inequality below
\[
\delta_A(M)\ge \delta_B(M) \ge 0\,.
\]
The second inequality is by Theorem~\ref{th:delta-AM} applied to $B$. Thus if $\delta_A(M)=0$, then
\[
\delta_A(M)=0=\delta_B(M)\,.
\]
The first equality already implies $M[\fp_A]=I_AM$, by Theorem~\ref{th:delta-AM}. The equality of defects of $M$ over $A$ and $B$ yields that the natural map $\con A\to \con B$ is an isomorphism; see Lemma~\ref{le:delta-invariance}.  This gives the first equality below:
\[
\length{\mco}{\con A} = \length{\mco}{\con B}= \length{\mco}{\cmod B}\,.
\]
Since $\delta_B(M)=0$,  applying Theorem~\ref{th:delta-AM} but now to $M$ viewed as a $B$-module gives $\delta_B(B)=0$, which justifies the second equality. Since $\depth B\ge 1$, the map $A\to B$ is an isomorphism of complete intersections; see Theorem \ref{th:Wiles}. In particular the $A$-module $M$ is faithful.

This completes the proof of the first statement. Given this, the second part is immediate from Theorem~\ref{th:prediamond}.
\end{proof}

\subsection*{An analog of Wiebe's result for modules}
Wiles' theorem characterizing complete intersection rings $A \in \acat$  can be deduced from the theorem of Wiebe~\cite[Theorem~2.3.16]{Bruns/Herzog:1998} that when $R$ is a local ring with maximal ideal $\fm$, its Fitting ideal $\fitt R{\fm}$ is nonzero if and only if $R$ is a complete intersection of dimension zero.  Diamond's theorem suggests the following module theoretic extension of Wiebe's theorem; compare with Lemma~\ref{le:prediamond0}.

\begin{lemma}
\label{le:postdiamond}
Let $R$ be a noetherian local ring, with maximal ideal $\fm$. If $M$ is a nonzero finitely generated $R$-module such that
\[
e(M)\le \length R{(\fitt R{\fm}\cdot M)}\cdot e(R)\,,
\]
then $R$ is a complete intersection with $\dim R=0$, and $M$ is free.
\end{lemma}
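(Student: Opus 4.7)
The plan is to reduce to the zero-dimensional Gorenstein setting of Lemma~\ref{le:prediamond0} by invoking Wiebe's theorem on the structure of $R$.

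The first step is to show $\fitt R \fm \cdot M \ne 0$. Since $M$ is nonzero, its multiplicity $e(M)$ is positive (interpreting $e$ as the leading Hilbert-Samuel coefficient in the dimension of $M$), and the hypothesis then forces $\length R{(\fitt R \fm \cdot M)} \cdot e(R) \ge 1$. In particular $\fitt R \fm$ is nonzero, and by Wiebe's theorem (\cite[Theorem~2.3.16]{Bruns/Herzog:1998}) this already yields the first part of the conclusion: $R$ is a complete intersection with $\dim R = 0$. Consequently $R$ is Artinian Gorenstein, so $e(R) = \length R R$ and $e(M) = \length R M$, and the socle $R[\fm]$ has length one over the residue field.

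For the freeness of $M$ I would appeal to Lemma~\ref{le:prediamond0}. The general containment of the top Fitting ideal in the annihilator gives $\fitt R \fm \subseteq \ann_R \fm = R[\fm]$, hence $\fitt R \fm \cdot M \subseteq R[\fm] \cdot M$. Substituting this into the hypothesis gives
\[
\length R M = e(M) \le \length R{(\fitt R \fm \cdot M)} \cdot e(R) \le \length R{(R[\fm] \cdot M)} \cdot \length R R,
\]
which is precisely the numerical criterion of Lemma~\ref{le:prediamond0}. That lemma then forces $M$ to be free over $R$.

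The main obstacle I anticipate is the opening move: one must verify that the hypothesis together with $M \ne 0$ actually forces $\fitt R \fm \cdot M$ to be nonzero, since $e(M)$ can a priori vanish for a nonzero module when $\dim M < \dim R$. Once this point is secured, the rest of the argument is essentially formal---Wiebe's theorem supplies the structure of $R$, and the containment of Fitting ideals in annihilators converts the hypothesis into the exact shape required by Lemma~\ref{le:prediamond0}.
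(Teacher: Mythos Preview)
Your argument follows the paper's proof exactly: invoke Wiebe's theorem to conclude that $R$ is a zero-dimensional complete intersection, then feed the hypothesis into Lemma~\ref{le:prediamond0}. The only cosmetic difference is that the paper uses the full conclusion of Wiebe's theorem, namely the \emph{equality} $\fitt R{\fm}=R[\fm]$, whereas you use only the containment $\fitt R{\fm}\subseteq \ann_R\fm=R[\fm]$; either suffices.

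The obstacle you flag at the end is genuine, and the paper's own proof does not resolve it either: it simply asserts ``As $M$ is nonzero, $e(M)\ne 0$'', which is inconsistent with the convention recorded in \ref{ch:multiplicity} that $e_A(M)=0$ whenever $\dim M<\dim A$. Under that convention the lemma is false as stated: take $R=k\pos{x}$ and $M=k$; then $\fitt R{\fm}\subseteq\ann_R\fm=0$ and $e(M)=0$, so the hypothesis is vacuous while the conclusion fails. The intended reading is presumably the one you supplied in parentheses---$e(M)$ normalized in the dimension of $M$, so that $e(M)>0$ for $M\ne 0$---or, equivalently, an implicit hypothesis $\dim M=\dim R$. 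Under either reading your reduction to Lemma~\ref{le:prediamond0} goes through and coincides with the paper's argument.
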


\begin{proof}
 As  $ M$ is nonzero, $e(M)\ne 0$ so the hypothesis implies $\fitt R{\fm}$ is nonzero. Thus Wiebe's theorem~\cite[Theorem~2.3.16]{Bruns/Herzog:1998} implies $R$ is complete intersection of Krull dimension zero. Moreover $\fitt R{\fm}=R[\fm]$, the socle of $R$.  At this point we can invoke Lemma~\ref{le:prediamond0} to deduce that $M$ is free.
\end{proof}

\section{Venkatesh's formula}
\label{se:Venkatesh}
In this section we establish a formula for the defect of a local ring in $\acat$, in terms of certain Andr\'e-Quillen homology modules, see Theorem~\ref{th:Venkatesh}. This  gives a  different  proof of  a variant of  the results in ~\cite{Venkatesh:2006} and  ~\cite{Appendix:2021}.

Throughout this section fix $A$ in $\acat$  with $\dim A = 1$, and let 
\[
A\to B \colonequals A/\gam{\fm_A}(A)
\]
be the maximal Cohen-Macaulay quotient of $A$; here $\gam{\fm_A}(A)$ is the $\fm_A$-power torsion submodule of $A$.

\begin{proposition}
\label{pr:gorsum}
Let $\alpha\colon C\to A$ be a surjective map in $\acat$, with $C$ a Gorenstein ring, and set   $I\colonequals \lambda_C(\ann_C(\Ker\alpha))$. With $B$ as above, one has an equality
\[
\length{\mco} {\cmod C} = \length{\mco}{\cmod B} + \length{\mco}{( \mco/ I)}\,.
\]
\end{proposition}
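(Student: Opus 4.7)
The plan is to establish the ideal identity $\lambda_C(I_C)=\lambda_B(I_B)\cdot I$ in the DVR $\mco$; the length formula follows by taking $\varpi$-adic valuations. Throughout, set $J:=\Ker\alpha$ and let $J'\subseteq C$ be the preimage of $\gam{\fm_A}(A)$, so that $B=C/J'$ and $J\subseteq J'\subseteq\fp_C$ (the latter because $\lambda_A$ kills $\fm_A$-torsion, hence $\gam{\fm_A}(A)\subseteq\fp_A$).

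I will first identify the canonical module of $B$ as an ideal of $C$. Applying $\Hom C{-}C$ to the short exact sequence $0\to\gam{\fm_A}(A)\to A\to B\to 0$: since $\gam{\fm_A}(A)$ is $\fm$-torsion and $\depth C\ge 1$, one has $\Hom C{\gam{\fm_A}(A)}C=0$, which forces the inclusion $\ann_C(J')\subseteq\ann_C(J)$ to be an equality. Write $W$ for this common ideal, so $I=\lambda_C(W)$. Note that $I_C=C[\fp_C]\subseteq W$, $W[\fp_B]=W[\fp_C]=I_C$, and that $\lambda_C|_{I_C}$ is injective by Lemma~\ref{le:depth=1}.

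Next, pick a generator $\tau_B$ of the principal ideal $I_B\subseteq B$ (Lemma~\ref{le:conI-free}) and a lift $\tilde\tau_B\in C$. Because $\fp_C\tilde\tau_B\subseteq J'$ and $J'W=0$, multiplication by $\tilde\tau_B$ sends $W$ into $W[\fp_C]=I_C$; applying $\lambda_C$ to $\tilde\tau_B W\subseteq I_C$ already yields $\lambda_B(I_B)\cdot I\subseteq\lambda_C(I_C)$. The substantive step is the reverse inclusion $I_C\subseteq\tilde\tau_B W$.

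To obtain it, apply $\Hom B{-}W$ to $0\to I_B\to B\to B/I_B\to 0$. Since $\ann_B(\tau_B)=\fp_B$, we have $I_B\cong B/\fp_B=\mco$ as $B$-modules, so $\Hom B{I_B}W\cong W[\fp_B]=I_C$, and the restriction map $W=\Hom BBW\to I_C$ is precisely multiplication by $\tilde\tau_B$. Hence $I_C/\tilde\tau_B W\cong\Ext 1B{B/I_B}W$. To show this vanishes, first verify $\gam{\fm_B}(B/I_B)=0$: if $\fm_B^n b\subseteq I_B=B[\fp_B]$, then $\fm_B^n(\fp_B b)=0$, so $\fp_B b\subseteq\gam{\fm_B}(B)=0$ (because $B$ is CM of positive depth), forcing $b\in I_B$. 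Thus $B/I_B$ is either zero or Cohen--Macaulay of dimension one, and in either case $\Ext 1B{B/I_B}W=0$ by the standard vanishing $\Ext iB{M}{W}=0$ for $i\ge 1$ whenever $M$ is a CM $B$-module of top dimension and $W$ is the dualizing module of $B$. Consequently $\tilde\tau_B W=I_C$, and passing to $\lambda_C$-images gives $\lambda_C(I_C)=\lambda_B(I_B)\cdot I$ in $\mco$, from which the length formula is immediate.

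The main obstacle is the $\Ext$ vanishing; the rest is bookkeeping with ideals inside $C$. The vanishing rests on two ingredients: the CM property of $B$, which kills $\fm$-torsion in $B$ and (via the colon computation $(I_B:\fm_B^n)=I_B$) in $B/I_B$; and the standard $\Ext$-vanishing against a dualizing module on CM modules of top dimension.
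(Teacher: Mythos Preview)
Your proof is correct and takes a genuinely different route from the paper's. Both arguments begin the same way, using $\Hom C{\gam{\fm_A}(A)}C=0$ to replace $A$ by its Cohen--Macaulay quotient $B$ (equivalently, to identify $\ann_C(J)=\ann_C(J')=W$). From there the approaches diverge.

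The paper applies $\Hom C{\mco}{-}$ to $0\to\Ker\alpha\to C\to A\to 0$, obtaining an exact sequence $0\to\Ext 1C{\mco}{\Ker\alpha}\to\cmod C\to\cmod A\to 0$, and then computes the Ext term via the stable module category: using $\Omega^{-1}\Ker\alpha\simeq A$ and Auslander duality in $\mathrm{\underline{MCM}}(C)$ to pass from $\sHom C{\mco}A$ to $\sHom CA{\mco}\cong\mco/I$. Your argument instead proves the ideal identity $\lambda_C(I_C)=\lambda_B(I_B)\cdot I$ directly, by recognising $W=\Hom CBC$ as the canonical module of $B$ and applying the standard vanishing $\Ext 1B{B/I_B}{\omega_B}=0$ for maximal Cohen--Macaulay modules (after the nice observation that $B/I_B$ has no $\fm$-torsion) to conclude that multiplication by $\tilde\tau_B$ maps $W$ onto $I_C$. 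This avoids the stable category and Auslander duality entirely, trading them for elementary canonical-module theory; the price is the extra verification that $B/I_B$ is MCM, but that is short. Your route is arguably more self-contained for readers unfamiliar with Buchweitz's framework, while the paper's route makes the role of the Gorenstein hypothesis (duality in the stable category) more conceptually visible.
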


\begin{proof}
First  we reduce to the case $A=B$. Applying $\Hom C-C$ to the exact sequence
\[
0\lra \gam{\fm_A}(A) \lra A\lra B\lra 0\,,
\]
yields an exact sequence
\[
0\lra \Hom CBC \lra \Hom CAC \lra \Hom C{\gam{\fm_A}(A)}C =0
\]
where the equality on the right holds because $\depth C\ge 1$. This gives the equality
\[
 \Hom CBC =  \Hom CAC\,,
\]
so that we can replace $A$ by $B$ and assume  $A$ is Cohen-Macaulay.

Consider the exact sequence
\[
0\lra \Ker \alpha \lra C\lra A\lra 0\,.
\]
Applying $\Hom C{\mco}{-}$ to it yields the exact sequence of $\mco$-modules
\[
0\lra \Hom C{\mco}{\Ker{\alpha}} \lra \Hom C{\mco}C \lra \Hom C{\mco}A \lra \Ext 1C{\mco}{\Ker\alpha}\lra 0
\]
Since the map $C\to \mco$ factors through $A$ one has $\Ker\alpha\subseteq \fp_C$ so
\[
\Hom C{\mco}{\Ker{\alpha}} = I_C \cap \Ker\alpha \ \subseteq \ I_C \cap \fp_C =0\,.
\]
Moreover $\Hom C{\mco}A = \Hom A{\mco}A$ so the exact sequence above becomes
\[
0\lra \eta_C \lra \eta_A \lra \Ext 1C{\mco}{\Ker\alpha}\lra 0\,,
\]
where $\eta_C$ and $\eta_A$ are the images of $I_C$ and $I_A$ in $\mco$. Thus we get an exact sequence
\[
0\lra \Ext 1C{\mco}{\Ker\alpha} \lra \cmod C \lra \cmod A \lra 0\,,
\]
and hence the equality
\begin{equation}
\label{eq:gorsum1}
\length{\mco} {\cmod C} = \length{\mco}{\cmod A} + \length{\mco}{ \Ext 1C{\mco}{\Ker\alpha}}\,.
\end{equation}

Now we analyze the Ext term above. For this it is convenient to work in the stable module category of $C$; see \cite{Buchweitz:2022} for  background. Since $\mco$ is maximal Cohen-Macaulay as a $C$-module, \cite[Corollary 6.4.1]{Buchweitz:2022} gives the first isomorphism below
\[
\Ext 1C{\mco}{\Ker\alpha} \cong \sHom {C}{\mco}{\Omega^{-1} \Ker\alpha} \cong \sHom {C}{\mco}{A} 
\]
The second isomorphism arises from the exact sequence $0\to \Ker \alpha\to C\to A\to 0$. On the other hand, keeping in mind that $A$ is also maximal Cohen-Macaulay as a $C$-module, from Auslander duality~\cite[Theorem~7.7.5]{Buchweitz:2022} one gets that
\[
\length{\mco} {\sHom C{\mco}A} = \length{\mco}{\sHom CA{\mco}}\,.
\]
By the definition of stable homomorphism one has the exact sequence in the top row of the diagram below
\[
\begin{tikzcd}
\Hom CAC\otimes_C \mco \ar[d, equal] \ar[r]  & \Hom CA{\mco} \ar[d, "\cong"] \ar[r] &  \sHom CA{\mco} \ar[d]\ar[r] & 0 \\
\ann_C(\Ker\alpha) \otimes_C \mco \ar[r]  & \mco \ar[r] &  \sHom CA{\mco} \ar[r]  & 0
\end{tikzcd}
\]
Thus $\sHom CA{\mco}\cong \mco/I$. Combining with this \eqref{eq:gorsum1} yields the desired equality.
\end{proof}

\begin{chunk}
\label{ch:ci-approximation}
Let now $\alpha\colon C\to A$ be a surjective map in $\acat$ with $C$ a complete intersection; see \cite[Lemma A.7]{Fakhruddin/Khare/Ramakrishna:2021}. We say that such an $\alpha$ is \emph{minimal} if the natural map $\con C\to \con A$ is bijective; it is always surjective. It is helpful to introduce the ideals
\[
I\colonequals \lambda_C(\ann_C(\Ker\alpha)) \qquad\text{and}\qquad J \colonequals \lambda_C(\fitt C{\Ker\alpha})\,.
\]
In particular, $I\supseteq J$. 
\end{chunk}

Given a map of rings $A\to B$ and a $B$-module $M$ we write $\aqh iBAM$ for the $i$th Andr\'e-Quillen homology module of the $A$-algebra $B$, with coefficients in $M$. We only need these functors for $i=0,1,2$ and Jacobi-Zariski sequence associated to maps; see, for instance,  \cite[\S2]{Brochard/Iyengar/Khare:2020}, or \cite{Iyengar:2007}.

Here is a formula for  $\delta_A(B)$ in terms of these modules; it can also be expressed as an equality of Fitting ideals. See \ref{ch:Venkatesh} for connections with earlier work.

\begin{theorem}
\label{th:Venkatesh}
With notation as above, one has (in)equalities
\[
\delta_A(B) = \length {\mco}{\aqh 2{\mco}{A}{\mco}} - \length{\mco}{(I/J)} \leq \length{\mco}{(\mco/I)}  \,.
\]
Moreover equality holds on the right when $\alpha$ is minimal.
\end{theorem}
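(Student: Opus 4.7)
The plan is to combine the structural results of Section~\ref{se:congruence} with the Jacobi--Zariski long exact sequence for Andr\'e--Quillen homology applied to $C \to A \to \mco$.

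First I would reduce $\delta_A(B)$ to a difference involving $\con C$. Since $B$ is the maximal Cohen--Macaulay quotient of $A$ and $\dim A = 1$, one has $\depth B = 1$, so Lemma~\ref{le:delta-invariance} yields $\cmod A(B) \cong \cmod B$; as $\rank_\mco B[\fp_A] = 1$ by~\eqref{eq:Irank}, this reads $\delta_A(B) = \length{\mco}{\con A} - \length{\mco}{\cmod B}$. Applying Proposition~\ref{pr:gorsum} to $\alpha\colon C \to A$ (valid as $C$ is Gorenstein, being complete intersection) rewrites $\length{\mco}{\cmod B} = \length{\mco}{\cmod C} - \length{\mco}{\mco/I}$. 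The Wiles--Lenstra theorem, i.e., the equality case of~\eqref{eq:Wiles-inequality} for the complete intersection $C$, gives $\length{\mco}{\cmod C} = \length{\mco}{\con C}$. Combining:
\[
\delta_A(B) = \length{\mco}{\con A} - \length{\mco}{\con C} + \length{\mco}{\mco/I}\,.
\]

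Next I would establish the vanishing $\aqh 2{\mco}{C}{\mco} = 0$. The Jacobi--Zariski sequence for $\mco \to C \xra{\lambda_C} \mco$ (composite the identity) with coefficients in $\mco$, combined with the vanishing of $D_i(\mco/\mco; -)$, yields an isomorphism $\aqh 2{\mco}{C}{\mco} \cong \aqh 1{C}{\mco}{\mco}$. After completion, present $C = \mco\pos{x_1,\dots,x_n}/(f_1,\dots,f_n)$ with $(f_i)$ a regular sequence (the count of relations being forced by $\dim C = 1$). The Jacobi--Zariski sequence for $\mco \to \mco\pos{\bs x} \to C$---combined with the vanishing of $D_{\ge 1}(\mco\pos{\bs x}/\mco; -)$ for the formally smooth $\mco$-algebra $\mco\pos{\bs x}$---identifies $\aqh 1{C}{\mco}{\mco}$ with the kernel of the Jacobian map $(a_{ij})\colon \mco^n \to \mco^n$ featured in~\ref{ch:example}. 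The hypothesis $C \in \acat$ forces $(a_{ij})$ to have full rank, so its kernel vanishes.

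With this vanishing in hand, the Jacobi--Zariski sequence for $C \xra{\alpha} A \xra{\lambda_A} \mco$ collapses to the four-term exact sequence
\[
0 \lra \aqh 2{\mco}{A}{\mco} \lra \aqh 1{A}{C}{\mco} \lra \con C \lra \con A \lra 0\,.
\]
One identifies $\aqh 1{A}{C}{\mco} \cong \Ker(\alpha) \otimes_C \mco$; the base-change property of Fitting ideals together with $\length{\mco}{\mco/\fitt{\mco}{M}} = \length{\mco}{M}$ for any finite length $\mco$-module $M$ yields $\length{\mco}{\aqh 1{A}{C}{\mco}} = \length{\mco}{\mco/J}$. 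Taking the alternating sum of lengths gives $\length{\mco}{\con A} - \length{\mco}{\con C} = \length{\mco}{\aqh 2{\mco}{A}{\mco}} - \length{\mco}{\mco/J}$; substituting into the reduced form of $\delta_A(B)$ and using $\length{\mco}{\mco/J} = \length{\mco}{\mco/I} + \length{\mco}{I/J}$ produces the equality. For the inequality, the four-term sequence shows $\aqh 2{\mco}{A}{\mco}$ injects into $\aqh 1{A}{C}{\mco}$, so $\length{\mco}{\aqh 2{\mco}{A}{\mco}} \le \length{\mco}{\mco/J}$, which rearranges to $\delta_A(B) \le \length{\mco}{\mco/I}$. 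Equality holds iff this injection is surjective, iff the map $\aqh 1{A}{C}{\mco} \to \con C$ vanishes, iff the (always surjective) map $\con C \to \con A$ is injective, iff $\alpha$ is minimal. The principal technical obstacle will be the rigorous verification of $\aqh 2{\mco}{C}{\mco} = 0$: while Andr\'e--Quillen homology of polynomial algebras vanishes cleanly, handling the power series ring $\mco\pos{\bs x}$ requires an appeal to formal smoothness or a careful completion argument.
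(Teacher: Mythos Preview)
Your argument is correct and matches the paper's proof: both combine Proposition~\ref{pr:gorsum}, the equality $\length{\mco}{\cmod C}=\length{\mco}{\con C}$ for the complete intersection $C$, and the four-term Jacobi--Zariski sequence for $C\to A\to\mco$, with the same length bookkeeping. The only substantive addition is your justification of $\aqh 2{\mco}{C}{\mco}=0$, which the paper leaves implicit; to sidestep the formal-smoothness issue you flag, run the transitivity sequence for $\mco\pos{\bs x}\to C\to\mco$ instead (both arrows are quotients by regular sequences, so their $D_{\ge 2}$ vanish), which directly exhibits $\aqh 2{\mco}{C}{\mco}$ as a submodule of the kernel of the full-rank Jacobian $\mco^n\to\mco^n$.
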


\begin{proof}
Since $\aqh 1AC{\mco}= (\Ker{\alpha})\otimes_C \mco$ one gets an equality
\[
\length {\mco}{\aqh 1{A}{C}{\mco}} = \length{\mco}{(\mco/J)}\,,
\]
From this and the Jacobi-Zariski sequence associated to $C\to A\to \mco$, which reads
\[
0 \to \aqh 2{\mco}{A}{\mco} \to  \aqh 1{A}{C}{\mco} \to \con C\lra \con A \to 0\,,
\]
one gets equalities
\begin{align*}
\length {\mco}{\con A} - \length {\mco}{\con C} 
	&=  \length {\mco}{\aqh 2{\mco}{A}{\mco}} -  \length {\mco}{\aqh 1{A}{C}{\mco}} \\
	&= \length {\mco}{\aqh 2{\mco}{A}{\mco}} -  \length{\mco}{(\mco/J)}\,.
\end{align*}
In particular $\length {\mco}{\aqh 2{\mco}{A}{\mco}} -  \length{\mco}{(\mco/J)}\le 0$ with equality when $\alpha$ is minimal; this justifies the inequality and the last assertion in the statement of the theorem. Moreover the equality above yields the second equality below:
\begin{align*}
\length{\mco}{\con A} - \length{\mco}{\cmod B} 
	&=\length{\mco}{\con A} - \length{\mco}{\con C} + \length{\mco}{(\mco/I)} \\
	&= \length {\mco}{\aqh 2{\mco}{A}{\mco}} - \length{\mco}{(\mco/J)} + \length{\mco}{(\mco/I)}  \\
	&= \length {\mco}{\aqh 2{\mco}{A}{\mco}} - \length{\mco}{(I/J)}\,.
\end{align*}
The first one is by Proposition~\ref{pr:gorsum}; it applies as complete intersections rings are Gorenstein, and also $\length{\mco}{\con C}=\length{\mco}{\cmod C}$. The claim about the defect of $B$ as a $A$-module follows.
\end{proof}

\begin{chunk}
\label{ch:Venkatesh}
Suppose that $A$ is a finite $\mco$-algebra, with $\lambda\colon A\to \mco$ a map of $\mco$-algebras. Then it is immediate from the Jacobi-Zariski sequence associated to the map $\mco \to A\to \mco$ that for any integer $i$ one has an isomorphism 
\[
\aqh {i}A{\mco}{\mco}\cong \aqh {i+1}{\mco}{A}{\mco}\,.
\]
Let $K$ be the field of fractions of $\mco$, so that the $\mco$-module $K/\mco$ is the injective hull of the residue field of $\mco$. Then it follows from Matlis duality~\cite[\S3.2]{Bruns/Herzog:1998} that 
\[
\length{\mco}{\aqh {i}A{\mco}{\mco}} \cong \length{\mco}{ \mathrm{D}^{i}(A/\mco;K/\mco)}
\]
where the module on the right is the $i$th Andr\'e-Quillen cohomology of the $\mco$-algebra $A$, with coefficients in $K/\mco$. Thus one can rewrite the equality in Theorem~\ref{th:Venkatesh} as
\[
\length{\mco}{\con A}-\length{\mco}{\cmod A(B)} = \length {\mco}{ \mathrm{D}^{1}(A/\mco;K/\mco)} - \length{\mco}{(I/J)} 
\]
It is in this form that the formula was proposed by Venkatesh~\cite{Venkatesh:2006}, and proved in ~\cite{Appendix:2021}. From our perspective the avatar in terms of Andr\'e-Quillen homology is more natural.
\end{chunk}

Theorem~\ref{th:Venkatesh} expresses the defect of $A$ as a difference of two positive integers. It is not clear why they are both zero when the defect is zero, as asserted by Wiles' theorem~\ref{th:Wiles}. What is more Venkatesh's formula only applies when $\dim A=1$. So we sketch an argument that deduces the latter result from the former, though only under the additional hypothesis that $\dim B=1$.  

\begin{proof}[Proof of Theorem~\ref{th:Wiles} when $\dim B=1$.]
We start by reducing to the case where $\dim A=1$. Set $\fb=\Ker(A\to B)$ and $A'\colonequals A/\fb^2$. Thus the map $\vf$ factors through the surjection $A\to A'$. This gives the second of the following inequalities:
\[
\length{\mco}{\cmod { B'}} \ge \length{\mco}{\con A} \ge \length{\mco}{\con {A'}}\,.
\]
The first one is by hypothesis. Thus the hypothesis of the desired result applies to the surjection $A'\to B$; we claim it suffices to verify the conclusion for this map, for if this map is an isomorphism one gets that $\fb=\fb^2$, so that $\fb=0$, that is to say $\vf$ is an isomorphism, as desired. Since $\dim A'=\dim B$ we can replace $A$ by $A'$ and assume $\dim A=1$.

Next we reduce to the case where $B$ is $A$ modulo its $\fm_A$-power torsion ideal, so we can apply Venkatesh's equality: Set $B' \colonequals A/\gam{\fm_A}(A)$.  Since $\vf$ is surjective, $\vf(\fm_A)=\fm_B$ so that $\vf(\gam{\fm_A}(A))$ is $\fm_B$-power torsion; thus $\depth B\ge 1$ implies $\vf(\gam{\fm_A}(A))=0$, that is to say, $\vf$ factors through the surjection $A\to B'$. This gives the first inequality below:
\[
\length{\mco}{\cmod { B'}} \ge \length{\mco}{\cmod B} \ge \length{\mco}{\con A}\,.
\]
The second one is part of the hypothesis.  Since $\depth B'\ge 1$ as well, the map $A\to B'$ also satisfies the hypothesis of the desired result. We claim that it suffices to verify then that $A$ is complete intersection. Indeed then $A= B'$ and keeping in mind that the lengths of $\cmod A$ and $\con A$ coincide, we get the inequality:
\[
\length{\mco}{\cmod B}  \ge \length{\mco}{\cmod A}\,.
\]
Then Corollary~\ref{co:iso-criteria} applies to yield that $\vf$ is an isomorphism. Thus we can assume $B= B'$, which puts us in the context of Theorem~\ref{th:Venkatesh}.

Since $\dim A=1$ we can choose a minimal presentation $\alpha\colon C\to A$, with $C$ a complete intersection; see~\ref{ch:ci-approximation}.   Theorem~\ref{th:Venkatesh} with $M=B$ yields
\[
\length{\mco}{\con A} - \length{\mco}{\cmod B} = \length{\mco}{(\mco/I)}\,.
\]
By the hypothesis, the term on the left is negative so we deduce that $I=\mco$. Therefore $\Ker\alpha=0$, so  $A=C$ and $A$ is complete intersection. 
\end{proof}

\begin{bibdiv}
\begin{biblist}

\bib{Avramov/Iyengar:2002}{article}{
   author={Avramov, Luchezar L.},
   author={Iyengar, Srikanth},
   title={Homological criteria for regular homomorphisms and for locally
   complete intersection homomorphisms},
   conference={
      title={Algebra, arithmetic and geometry, Part I, II},
      address={Mumbai},
      date={2000},
   },
   book={
      series={Tata Inst. Fund. Res. Stud. Math.},
      volume={16},
      publisher={Tata Inst. Fund. Res., Bombay},
   },
   date={2002},
   pages={97--122},
   review={\MR{1940664}},
}

\bib{Bockle/Khare/Manning:2021b}{article}{
   author={B\"{o}ckle, Gebhard},
   author={Khare, Chandrashekhar B.},
   author={Manning, Jeffrey},
   title={Wiles defect for Hecke algebras that are not complete
   intersections},
   journal={Compos. Math.},
   volume={157},
   date={2021},
   number={9},
   pages={2046--2088},
   issn={0010-437X},
   review={\MR{4301563}},
   doi={10.1112/S0010437X21007454},
}

\bib{Bockle/Khare/Manning:2021a}{article}{
	author={B\"ockle, Gebhard},
	author={Khare, Chandrashekhar B.},
	author={Manning, Jeffrey},
	title={Wiles defect of Hecke algebras via local-global arguments},
	date={2021},
	status={preprint},
	eprint={https://arxiv.org/abs/2108.09729},
}

\bib{Brochard/Iyengar/Khare:2020}{article}{
	author={Brochard, Sylvain},
	author={Iyengar, Srikanth B.},
	author={Khare, Chandrashekhar B.},
	title={A freeness criterion without patching for modules over local rings},
	date={2021},
	journal={J. Inst. Math. Jussieu},
	status={to appear},
	}
	
\bib{Bruns/Herzog:1998}{book}{
   author={Bruns, Winfried},
   author={Herzog, J\"{u}rgen},
   title={Cohen-Macaulay rings (revised edition)},
   series={Cambridge Studies in Advanced Mathematics},
   volume={39},
   publisher={Cambridge University Press, Cambridge},
   date={1998},
   pages={xiv+453},
   isbn={0-521-41068-1},
   review={\MR{1251956}},
}

\bib{Buchweitz:2022}{book}{
       author={Buchweitz, Ragnar-Olaf},
       title={Maximal Cohen-Macaulay modules and Tate cohomology},
      series={Mathematical Surveys and Monographs},
      volume={262},
      publisher={American Mathematical Society, Providence, RI},
       date={2021},
}

\bib{Darmon/Diamond/Taylor:1997}{article}{
   author={Darmon, Henri},
   author={Diamond, Fred},
   author={Taylor, Richard},
   title={Fermat's last theorem},
   conference={
      title={Elliptic curves, modular forms \& Fermat's last theorem},
      address={Hong Kong},
      date={1993},
   },
   book={
      publisher={Int. Press, Cambridge, MA},
   },
   date={1997},
   pages={2--140},
   review={\MR{1605752}},
}

\bib{Diamond:1997}{article}{
   author={Diamond, Fred},
   title={The Taylor-Wiles construction and multiplicity one},
   journal={Invent. Math.},
   volume={128},
   date={1997},
   number={2},
   pages={379--391},
   issn={0020-9910},
   review={\MR{1440309}},
   doi={10.1007/s002220050144},
}

\bib{Fakhruddin/Khare/Ramakrishna:2021}{article}{
   author={Fakhruddin, Najmuddin},
   author={Khare, Chandrashekhar},
   author={Ramakrishna, Ravi},
   title={Quantitative level lowering for Galois representations},
   journal={J. Lond. Math. Soc. (2)},
   volume={103},
   date={2021},
   number={1},
   pages={250--287},
   issn={0024-6107},
   review={\MR{4203049}},
   doi={10.1112/jlms.12373},
}

\bib{Appendix:2021}{article}{
	author={Fakhruddin, Najmuddin},
	author={Khare, Chandrashekhar B.},
	title={A formula of Venkatesh},
	date={2021},
	status={preprint, appendix to  \cite{ Bockle/Khare/Manning:2021a}},
	eprint={},
}

\bib{Hida:1981}{article}{
   author={Hida, Haruzo},
   title={On congruence divisors of cusp forms as factors of the special
   values of their zeta functions},
   journal={Invent. Math.},
   volume={64},
   date={1981},
   number={2},
   pages={221--262},
   issn={0020-9910},
   review={\MR{629471}},
   doi={10.1007/BF01389169},
}

\bib{Iyengar:2007}{article}{
   author={Iyengar, Srikanth},
   title={Andr\'{e}-Quillen homology of commutative algebras},
   conference={
      title={Interactions between homotopy theory and algebra},
   },
   book={
      series={Contemp. Math.},
      volume={436},
      publisher={Amer. Math. Soc., Providence, RI},
   },
   date={2007},
   pages={203--234},
   review={\MR{2355775}},
   doi={10.1090/conm/436/08410},
}

\bib{Lenstra:1995}{article}{
   author={Lenstra, H. W., Jr.},
   title={Complete intersections and Gorenstein rings},
   conference={
      title={Elliptic curves, modular forms, \& Fermat's last theorem},
      address={Hong Kong},
      date={1993},
   },
   book={
      series={Ser. Number Theory, I},
      publisher={Int. Press, Cambridge, MA},
   },
   date={1995},
   pages={99--109},
   review={\MR{1363497}},
}

\bib{Ribet:1983}{article}{
   author={Ribet, Kenneth A.},
   title={Mod $p$ Hecke operators and congruences between modular forms},
   journal={Invent. Math.},
   volume={71},
   date={1983},
   number={1},
   pages={193--205},
   issn={0020-9910},
   review={\MR{688264}},
   doi={10.1007/BF01393341},
}

\bib{Taylor/Wiles:1995}{article}{
   author={Taylor, Richard},
   author={Wiles, Andrew},
   title={Ring-theoretic properties of certain Hecke algebras},
   journal={Ann. of Math. (2)},
   volume={141},
   date={1995},
   number={3},
   pages={553--572},
   issn={0003-486X},
   review={\MR{1333036}},
   doi={10.2307/2118560},
}

\bib{Venkatesh:2006}{article}{
	author={Venkatesh, Akshay},
	title={Derived version of Wiles's equality},
	date={2016},
	status={unpublished},
	}
	
\bib{Wiles:1995}{article}{
   author={Wiles, Andrew},
   title={Modular elliptic curves and Fermat's last theorem},
   journal={Ann. of Math. (2)},
   volume={141},
   date={1995},
   number={3},
   pages={443--551},
   issn={0003-486X},
   review={\MR{1333035}},
   doi={10.2307/2118559},
}
\end{biblist}
\end{bibdiv}

\end{document}